\newtheorem{theorem}{Theorem}[section]
\newtheorem{theorem*}{Theorem}
\newtheorem{corollary}[theorem]{Corollary}
\newtheorem{corollary*}[theorem*]{Corollary}
\newtheorem{lemma}[theorem]{Lemma}
\newtheorem{proposition}[theorem]{Proposition}
\theoremstyle{definition}
\newtheorem{definition}[theorem]{Definition}
\newtheorem{question}[theorem]{Question}
\newtheorem*{question*}{Question}
\newtheorem*{conjecture*}{Conjecture}
\newtheorem{example}[theorem]{Example}
\newtheorem*{notation*}{Notation}
\newtheorem*{claim*}{Claim}
\newtheorem{definitiontheorem}[theorem]{Definition-Theorem}
\begin{document}
\setlength{\baselineskip}{16pt}

\title[On $\tau$-tilting modules over trivial extensions]
{On $\tau$-tilting modules over trivial extensions of gentle tree algebras}
\author[Qi Wang]{Qi Wang}
\address{Yau Mathematical Sciences Center, Tsinghua University, Beijing 100084, China}
\email{infinite-wang@outlook.com \& infinite-wang@tsinghua.edu.cn}
\author[Yingying Zhang]{Yingying Zhang*}
\address{Department of Mathematics, Huzhou University, Huzhou 313000, Zhejiang Province, China}
\email{yyzhang@zjhu.edu.cn}

\keywords{$\tau$-tilting modules, gentle algebras, trivial extensions, Brauer tree algebras}
\thanks{2020 {\em Mathematics Subject Classification.} 16G20, 16G60}
\thanks{*Corresponding author}

\begin{abstract}
We show that trivial extensions of gentle tree algebras are exactly Brauer tree algebras without exceptional vertex. 
We also give a characterization for the algebras whose trivial extensions are Brauer line/star/cycle algebras. 
As a consequence, the number of support $\tau$-tilting modules over the trivial extension $T(A)$ of a gentle tree algebra $A$ depends only on the number of simple $A$-modules.
\end{abstract}
\maketitle
\section{Introduction}
Support $\tau$-tilting modules were introduced by Adachi, Iyama and Reiten in \cite{AIR}, which can be regarded as a generalization of classical tilting modules from the viewpoint of mutation. It has recently been shown by many mathematicians that the class of support $\tau$-tilting modules admits a nice behavior. For example, the class admits bijections to the class of two-term silting complexes, the class of left finite semibricks and the class of functorially finite torsion classes, see \cite{AIR, Asai} for more details.

One of the combinatorial properties of a finite-dimensional algebra $A$ is the number of support $\tau$-tilting modules over $A$, and $A$ is called \emph{$\tau$-tilting finite} if the number is finite. The number has already been known for several classes of $\tau$-tilting finite algebras, such as, path algebras of Dynkin quivers \cite{ONFR}, Nakayama algebras \cite{Ada-nakayama, GS-Nakayama}, preprojective algebras of Dynkin type \cite{Mizuno}, Brauer tree algebras \cite{AMN-Brauer-tree, Aoki-Brauer-tree}, Brauer cycle algebras with odd cycle length \cite{Aoki-sign-decomposition}, symmetric radical cube zero algebras \cite{AA}, tame Hecke algebras \cite{W-two-point}, tame Schur algebras \cite{W-schur} and so on. Also, one may use the String Applet \cite{G-string applet} to calculate the number of support $\tau$-tilting modules over special biserial algebras, string algebras and gentle algebras. In the cases of path algebras of Dynkin quivers and Brauer tree algebras, one may observe that the number of support $\tau$-tilting modules is independent of the orientations associated with the quivers.

The trivial extension of algebras can be traced back to the trivial extension of abelian categories introduced in \cite{FGR-trivial-extension}. We recall that the trivial extension of an algebra $A$ by an $A$-$A$-bimodule $M$ is defined as $A\ltimes M$ whose underlying additive abelian group is $A\oplus M$ with multiplication given by $(a,n)\cdot (b,m):=(ab,am+nb)$. There are already some interesting results on support $\tau$-tilting modules over $A\ltimes M$, see \cite{LZ-trivial-extension}. In particular, the second author \cite{Zhang-gluing} provided an explicit construction of support $\tau$-tilting modules over triangular matrix algebras, i.e., $(A\oplus B)\ltimes M$ with an $A$-$B$-bimodule $M$.

Let $K$ be an algebraically closed field and $D(-):=\mathsf{Hom}_K(-,K)$. We are interested in the \emph{trivial extension $T(A)$ of $A$} by its minimal co-generator $DA$, i.e., $T(A)=A\ltimes DA$. This special case gives a subclass of symmetric algebras and has attracted a lot of attention, for example, see \cite{AHR-trivial-extension, HRS-iterated-tilted, HW-trivial-extension, Tachikawa-trivial-extension-hereditary}. We simply call $T(A)$ the trivial extension of $A$.

Let $A=K\Delta $ be the path algebra of a Dynkin quiver $\Delta $. Then $T(A)$ is representation-finite (see \cite{Tachikawa-trivial-extension-hereditary}) and obviously, $\tau$-tilting finite, but the number of support $\tau$-tilting modules over $T(A)$ is not known. We note that $A$ (so is $T(A)$) is $\tau$-tilting infinite if $\Delta$ is not of Dynkin type (see, for example, \cite[Theorem 2.6]{Ada-rad-square-0}). In 2016, Iyama pointed out to the second author the following natural question.
\begin{question}\label{question}
Is the number of support $\tau$-tilting modules over $T(A)$ independent of the orientation of $A$?
\end{question}

Actually, the answer is usually negative if $A$ is of type $\mathbb{D}$. For example, we consider path algebras $KQ_i$ ($i=1,2$) with
\begin{center}
$Q_1:\vcenter{\xymatrix@C=0.8cm@R=0.2cm{\circ\ar[dr]& &\\
&\circ&\circ\ar[l]\\
\circ\ar[ur]&&}}\quad$ and $\quad Q_2: \vcenter{\xymatrix@C=0.8cm@R=0.2cm{\circ& &\\
&\circ\ar[ul]\ar[dl]&\circ\ar[l]\\
\circ&&}}$.
\end{center}
Then, by direct calculation, the number of support $\tau$-tilting modules over $T(KQ_1)$ is 84 while the number over $T(KQ_2)$ is 86. In this article, we mainly focus on the trivial extensions of path algebras of type $\mathbb{A}$.

Instead of giving an answer for Question \ref{question} directly, we consider a more general setting. A crucial idea we use is the connection between gentle algebras and Brauer graph algebras, that is, the trivial extensions of gentle algebras are exactly the Brauer graph algebras whose multiplicity function identically equals one (see \cite{Schroll-trivial-extension}). Since both gentle algebras and Brauer graph algebras are well-studied in the aspect of $\tau$-tilting theory (\cite{Aoki-Brauer-tree, AAC-Brauer-graph-alg, AMN-Brauer-tree, BDMTY-gentle, FGLZ-gentle, P-gentle}, etc.),  the following result is meaningful.

\begin{theorem}[{Theorem \ref{main-result} and Corollary \ref{cor-main-result}}]
Let $A$ be a finite-dimensional algebra and $T(A)$ the trivial extension of $A$. Then, the following conditions are equivalent:
\begin{enumerate}
  \item $A$ is a gentle tree algebra.
  \item $T(A)$ is a Brauer tree algebra without exceptional vertex.
\end{enumerate}
\end{theorem}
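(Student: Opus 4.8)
The plan is to derive the equivalence (1) $\Leftrightarrow$ (2) from Schroll's identification \cite{Schroll-trivial-extension} of trivial extensions of gentle algebras with Brauer graph algebras of trivial multiplicity, and then to read off the enumeration from the known $\tau$-tilting counts for Brauer tree algebras. Concretely, for a gentle algebra $A$ the algebra $T(A)$ is the Brauer graph algebra of a Brauer graph $G$ whose multiplicity function is identically $1$, and conversely every Brauer graph algebra of trivial multiplicity arises in this way. Since ``without exceptional vertex'' means precisely that the multiplicity function is identically $1$, this clause of (2) is automatic on the Brauer side, and (1) $\Leftrightarrow$ (2) collapses to the single assertion that the quiver $Q$ of $A$ is a tree if and only if the Brauer graph $G$ is a tree.

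Two features of the dictionary are immediate. First, the edges of $G$ index the simple $T(A)$-modules, which coincide with the simple $A$-modules, so $G$ has exactly $n$ edges; second, $A$ is connected if and only if $T(A)$ is connected if and only if $G$ is connected. Under connectedness both tree conditions amount to the vanishing of the respective first Betti numbers, so it suffices to prove $b_1(G)=b_1(Q)$. The vertices of $G$ correspond to the special cycles of the symmetric algebra $T(A)$, and a direct count of these from the gentle structure of $A$ gives the number $V$ of vertices of $G$; I would then verify $n-V+1=a-n+1$, where $a$ is the number of arrows of $Q$, that is $b_1(G)=b_1(Q)$. This settles (1) $\Rightarrow$ (2). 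For (2) $\Rightarrow$ (1) I would use the converse half of \cite{Schroll-trivial-extension} to write the given Brauer tree algebra as $T(A)$ for some gentle $A$ (taking an admissible cut suffices) and run the same Betti-number identity backwards to conclude that $Q$ is a tree, so that $A$ is a gentle tree algebra. The main obstacle is exactly this vertex count: one must check that the gentle relations translate into the cyclic orderings at the vertices of $G$ faithfully enough to pin down $V$ and to secure $b_1(G)=b_1(Q)$ in full generality rather than only on examples, being careful with the degree-one vertices of $G$ (which contribute no arrow to the quiver of $T(A)$) and with the non-uniqueness of $A$ in the backward direction, where producing one gentle tree algebra is enough.

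For the enumeration, the equivalence identifies $T(A)$ with a Brauer tree algebra without exceptional vertex on $n$ edges. I would then invoke the known fact that the number of support $\tau$-tilting modules over such an algebra is independent of the shape of the tree and depends only on the number of edges \cite{AMN-Brauer-tree, Aoki-Brauer-tree}; evaluating it on the Brauer star, where $T(A)$ is a symmetric Nakayama algebra on $n$ simples, yields the value $\binom{2n}{n}$. Hence the count equals $\binom{2n}{n}$ for every gentle tree algebra $A$ with $n$ simples, as claimed.
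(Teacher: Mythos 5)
Your forward direction is a genuinely different, and workable, route from the paper's: the paper never computes with Schroll's graph $\Gamma_A$ at all, but instead identifies gentle tree algebras with iterated tilted algebras of type $\mathbb{A}_n$ (Lemma \ref{lem-gentle-tree=iterated-titled}, via \cite{AH-generalized-tilted, HRS-iterated-tilted}) and then chains \cite{AHR-trivial-extension}, \cite{HW-trivial-extension} and \cite{Skowronski-2006} to land on Brauer tree algebras without exceptional vertex. Your Euler-characteristic computation does close: every arrow lies in a unique maximal path and every vertex of $Q$ lies in exactly two elements of $\overline{\mathcal{M}}$ counted with multiplicity, so $a+|\mathcal{M}|+|\mathcal{M}_0|=2n$, hence $V=2n-a$ and $b_1(\Gamma_A)=n-V+1=a-n+1=b_1(Q)$; together with Proposition \ref{prop-trivial-extension-of-gentle-alg} this yields (1) $\Rightarrow$ (2), and the count $\binom{2n}{n}$ then follows from Proposition \ref{prop-Brauer-tree} exactly as in the paper.

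The gap is in (2) $\Rightarrow$ (1). The hypothesis there is only that $A$ is a finite-dimensional algebra whose trivial extension happens to be a Brauer tree algebra without exceptional vertex, and you must conclude that this particular $A$ is a gentle tree algebra. Exhibiting, via an admissible cut, some gentle tree algebra $A'$ with $T(A')\simeq T(A)$ does not do this: trivial extensions do not determine the base algebra (many non-isomorphic algebras share one trivial extension), so your remark that ``producing one gentle tree algebra is enough'' is precisely the step that fails. Schroll's theorem cannot repair this, since it presupposes that $A$ is gentle, whereas here gentleness of $A$ is part of what must be proved. What is needed is a statement constraining an arbitrary $A$ from properties of $T(A)$ alone; this is exactly what \cite[Theorem]{AHR-trivial-extension} supplies ($T(A)$ representation-finite of Cartan class $\mathbb{A}_n$ forces $A$ to be iterated tilted of type $\mathbb{A}_n$), after which Lemma \ref{lem-gentle-tree=iterated-titled} converts ``iterated tilted of type $\mathbb{A}_n$'' back into ``gentle tree''. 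Without an ingredient of this kind your backward implication does not go through.
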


By combining the results in \cite{AMN-Brauer-tree} or \cite{Aoki-Brauer-tree}, it is obvious that if $A$ is a gentle tree algebra with $n$ simples, then the number of support $\tau$-tilting modules over $T(A)$ is $\binom{2n}{n}$.
We can give a positive answer for type $\mathbb{A}$ in Question \ref{question} as follows.
\begin{corollary}[Corollary \ref{answer-to-question}]
Let $A=KQ$ be a path algebra of type $\mathbb{A}$. Then, the number of support $\tau$-tilting modules over $T(A)$ is independent of the orientation of $Q$.
\end{corollary}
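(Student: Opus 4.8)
The plan is to deduce the corollary directly from the Theorem above, which already computes the number of support $\tau$-tilting modules over $T(A)$ for any gentle tree algebra $A$ and yields the value $\binom{2n}{n}$ depending only on the number $n$ of simple $A$-modules. Thus the only thing left to establish is that every path algebra $KQ$ of type $\mathbb{A}$ is a gentle tree algebra, regardless of the orientation of $Q$, and that its number of simples does not change with the orientation.

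First I would verify that $KQ$ satisfies the definition of a gentle tree algebra for an arbitrary orientation $Q$ of the Dynkin diagram $\mathbb{A}_n$. The underlying graph of $Q$ is the path on $n$ vertices, which is a tree, so the tree condition holds. Since $KQ$ is hereditary we have $I = 0$, whence the two conditions on the relation ideal are vacuous; the condition that at most two arrows start and at most two arrows end at each vertex holds because every vertex of $\mathbb{A}_n$ has degree at most two; and the remaining unique-composition condition is immediate, since at a vertex through which a path passes there is exactly one incoming and one outgoing arrow. Hence $KQ$ is gentle, and its underlying graph being a tree, it is a gentle tree algebra for every orientation.

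Finally I would invoke the Theorem. As $Q$ has $n$ vertices for every orientation of $\mathbb{A}_n$, the algebra $KQ$ has exactly $n$ simple modules independently of the orientation; by the Theorem the number of support $\tau$-tilting modules over $T(KQ)$ is therefore $\binom{2n}{n}$, a quantity depending only on $n$. This gives the desired independence. I do not expect any real obstacle, since the entire substance is already contained in the Theorem and the corollary amounts to the observation that type $\mathbb{A}$ path algebras are precisely the simplest gentle tree algebras; the one point meriting a line of care is to confirm that the gentleness definition accommodates all orientations of $\mathbb{A}_n$, including alternating sink--source patterns, which the uniform degree-$\leq 2$ argument settles at once.
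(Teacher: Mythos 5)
Your proposal is correct and follows exactly the paper's route: the paper's own proof is the one-line observation that $KQ$ is a gentle tree algebra for any orientation of $Q$, after which Corollary \ref{cor-main-result} gives the orientation-independent count $\binom{2n}{n}$. Your verification that the gentleness axioms hold vacuously for a hereditary algebra on an $\mathbb{A}_n$ quiver is just the detail the paper leaves implicit.
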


We notice that Brauer cycle algebras are also distinguished in the class of Brauer graph algebras. Thus, we also determine the condition for the trivial extension $T(A)$ of a gentle algebra $A$ being a Brauer cycle algebra, as follows.
\begin{theorem}[{Theorem \ref{result-Brauer-cycle} and Corollary \ref{cor-Brauer-cycle}}]
Let $A$ be a finite-dimensional algebra and $T(A)$ the trivial extension of $A$. Then, the following conditions are equivalent:
\begin{enumerate}
  \item $A$ is a gentle algebra with a quiver of type $\widetilde{\mathbb{A}}$, and $\mathsf{rad}^2\ A=0$.
  \item $T(A)$ is a Brauer cycle algebra.
\end{enumerate}
\end{theorem}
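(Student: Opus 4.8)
The plan is to obtain both the equivalence and the enumeration from the correspondence between trivial extensions of gentle algebras and Brauer graph algebras, running parallel to the proof of Theorem~\ref{main-result}. By \cite{Schroll-trivial-extension}, $T(A)$ is a Brauer graph algebra with multiplicity function identically $1$ exactly when $A$ is gentle, and a Brauer cycle algebra is of this form; so in both directions I may assume $A$ gentle and reduce to deciding when the underlying Brauer graph $G_{T(A)}$ is an $n$-cycle. For this I will use the thread description of $G_{T(A)}$ already employed in the tree case: its edges are indexed by the vertices of the quiver $Q$ of $A$ (equivalently by the simple $A$-modules), each permitted thread of $A$ gives a vertex of $G_{T(A)}$ whose valency equals the number of vertices of $Q$ lying on that thread (that is, its length plus one), and the two endpoints of the edge indexed by a vertex $v$ of $Q$ are the two threads through $v$.

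The argument then rests on two facts read off from this description. First, a permitted thread has length at most $1$ precisely when $A$ has no allowed path of length $2$, so $\mathsf{rad}^2\,A=0$ is equivalent to $G_{T(A)}$ having all valencies at most $2$; under $\mathsf{rad}^2\,A=0$ the length-$1$ threads are exactly the arrows of $Q$, each of valency $2$, and the remaining trivial threads have valency $1$, occurring exactly at the vertices of $Q$ where fewer than two arrows meet. Second, a connected graph with all valencies at most $2$ is a path or a cycle, so $G_{T(A)}$ is a cycle iff it has no valency-$1$ vertex, i.e.\ no trivial thread. For (1)$\Rightarrow$(2): if $Q$ is of type $\widetilde{\mathbb{A}}_n$ then every vertex has degree $2$, so there are no trivial threads and $G_{T(A)}$ is connected and $2$-regular, hence an $n$-cycle, and $T(A)$ is a Brauer cycle algebra. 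For (2)$\Rightarrow$(1): all valencies are $2$, whence $\mathsf{rad}^2\,A=0$; writing $m$ for the number of arrows of $Q$ and $t$ for the number of trivial threads, the vertex count $m+t=n$ and the valency sum $2m+t=2n$ of the $n$-cycle give $m=n$ and $t=0$, so $Q$ has no vertex of degree less than $2$ and has as many arrows as vertices, forcing $Q$ to be connected and $2$-regular, i.e.\ of type $\widetilde{\mathbb{A}}_n$. That such an $A$ is automatically gentle is a direct check of the gentle axioms: in type $\widetilde{\mathbb{A}}_n$ every vertex has degree $2$, so each vertex through which a length-$2$ path runs has in-degree and out-degree one, giving a unique relation there. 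This proves (1)$\Leftrightarrow$(2).

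Granting the equivalence, the final assertion is immediate: when (2) holds, $T(A)$ is a Brauer cycle algebra on $n$ edges, and for odd $n$ the number of support $\tau$-tilting modules over such an algebra is $2^{2n-1}$ by \cite{Aoki-sign-decomposition}; this is Corollary~\ref{cor-Brauer-cycle}. The hypothesis that $n$ be odd is essential, since Brauer cycle algebras on an even number of edges are $\tau$-tilting infinite \cite{Aoki-sign-decomposition}.

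I expect the only real obstacle to lie in making the thread description of $G_{T(A)}$ precise, that is, in justifying the two numerical inputs used above: that the valency of the vertex attached to a permitted thread equals its length plus one, and that the trivial threads sit exactly at the vertices of $Q$ where fewer than two arrows meet. This is the ribbon-graph bookkeeping underlying Schroll's construction \cite{Schroll-trivial-extension}, and it is the same dictionary that drives the proof of Theorem~\ref{main-result}; once it is recorded, the length--valency correspondence converts $\mathsf{rad}^2\,A=0$ into the valency bound and the handshake count fixes the shape of $Q$, so no further computation is required.
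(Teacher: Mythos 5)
Your proposal is correct, and it rests on the same engine as the paper's proof, namely Schroll's dictionary between the set $\overline{\mathcal{M}}=\mathcal{M}\cup\mathcal{M}_0$ of maximal and distinguished trivial paths of a gentle algebra $A$ and the vertices of the Brauer graph $\Gamma_A$ of $T(A)$ (Proposition \ref{prop-trivial-extension-of-gentle-alg}); like the paper, you also need the converse half of Schroll's theorem (a Brauer graph algebra with $m\equiv 1$ is the trivial extension of a gentle algebra) to start the direction (2)$\Rightarrow$(1), which the paper states in the introduction but not in Proposition \ref{prop-trivial-extension-of-gentle-alg}. Where you genuinely diverge is in forcing the quiver of $A$ to be of type $\widetilde{\mathbb{A}}_n$: the paper does this by a local case analysis, excluding a subquiver of type $\mathbb{D}_4$ (via the proof of Lemma \ref{lem-subquiver-of-type-D}) and two further small configurations, and then asserting that what remains must be $\widetilde{\mathbb{A}}_n$; you instead read $\mathsf{rad}^2 A=0$ off from the valency bound (valency of $v(p)$ equals length of $p$ plus one) and then pin down the shape of $Q$ by a global handshake count ($m+t=n$, $2m+t=2n$, hence $m=n$, $t=0$). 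Your counting argument is arguably cleaner and more self-contained than the paper's subquiver exclusion, at the cost of having to record the length--valency bookkeeping explicitly. The sufficiency direction and the enumeration $2^{2n-1}$ via Aoki's theorem (Proposition \ref{prop-Brauer-cycle}) coincide with the paper's. One small caution: your statement that the trivial threads sit exactly at the vertices of $Q$ where fewer than two arrows meet is not true for general gentle algebras --- a vertex with one incoming and one outgoing arrow whose composition is \emph{not} a relation also contributes an element of $\mathcal{M}_0$ --- but it is correct once $\mathsf{rad}^2 A=0$ has been established, which is the only place you use it, so the argument stands.
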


This paper is organized as follows. In Section \ref{section-2}, we review the background of $\tau$-tilting theory, Brauer graph algebras and gentle algebras; many useful propositions are also displayed there. In Section \ref{section-3}, we give our main result on gentle tree algebras. Furthermore, we give refined conditions on gentle tree algebras for which the trivial extensions are Brauer line/star algebras. We show the result about Brauer cycle algebras at the end of Section \ref{section-3}.

\section{Preliminaries}\label{section-2}
In this section, we recall the fundamental definitions in $\tau$-tilting theory and collect some results on Brauer graph algebras and gentle algebras which are needed for the later section. For more materials related to $\tau$-tilting theory, we refer to \cite{AIR, AAC-Brauer-graph-alg, P-gentle}, etc.

\subsection{$\tau$-tilting theory}\label{subsection-2.1}
Let $A$ be a finite-dimensional basic algebra over an algebraically closed field $K$. We denote by $\mathsf{mod}\ A$ the category of finitely generated right $A$-modules and by $\mathsf{proj}\ A$ the category of finitely generated projective right $A$-modules. For any $M\in \mathsf{mod}\ A$, let $\mathsf{Fac}(M)$ be the full subcategory of $\mathsf{mod}\ A$ whose objects are factor modules of finite direct sums of copies of $M$. We denote by $A^{\mathsf{op}}$ the opposite algebra of $A$ and by $|M|$ the number of isomorphism classes of indecomposable direct summands of $M$. Let $\tau$ be the Auslander-Reiten translation in $\mathsf{mod}\ A$.

\begin{definition}[{\cite[Definition 0.1]{AIR}}]\label{def-tau-tilting}
Let $M$ be a right $A$-module.
\begin{enumerate}
  \item $M$ is called \emph{$\tau$-rigid} if $\mathsf{Hom}_A(M,\tau M)=0$.
  \item $M$ is called \emph{$\tau$-tilting} if $M$ is $\tau$-rigid and $\left | M \right |=\left | A \right |$.
  \item $M$ is called \emph{support $\tau$-tilting} if there exists an idempotent $e$ of $A$ such that $M$ is a $\tau$-tilting $\left ( A/AeA \right )$-module.
\end{enumerate}
\end{definition}

\begin{definition}[{\cite[Definition 1.1]{DIJ-tau-tilting-finite}}]\label{def-tau-tilting-finite}
An algebra $A$ is called \emph{$\tau$-tilting finite} if it has only finitely many pairwise non-isomorphic basic $\tau$-tilting modules. Otherwise, $A$ is called \emph{$\tau$-tilting infinite}.
\end{definition}

We denote by $\mathsf{i\tau\text{-}rigid}\ A$ (respectively, $\mathsf{s\tau\text{-}tilt}\ A$) the set of isomorphism classes of indecomposable $\tau$-rigid (respectively, basic support $\tau$-tilting) $A$-modules. It is shown by \cite[Theorem 0.2]{AIR} that any $\tau$-rigid $A$-module is a direct summand of some $\tau$-tilting $A$-modules. Moreover, we have the following statement.

\begin{proposition}[{\cite[Corollary 2.9]{DIJ-tau-tilting-finite}}]
An algebra $A$ is $\tau$-tilting finite if and only if one of (equivalently, any of) the sets $\mathsf{i\tau\text{-}rigid}\ A$ and  $\mathsf{s\tau\text{-}tilt}\ A$ is a finite set.
\end{proposition}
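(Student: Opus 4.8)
The plan is to prove the three conditions equivalent by running the cycle of implications: ($A$ is $\tau$-tilting finite) $\Rightarrow$ ($\mathsf{\tau\text{-}rigid}\ A$ finite) $\Rightarrow$ ($\mathsf{s\tau\text{-}tilt}\ A$ finite) $\Rightarrow$ ($A$ is $\tau$-tilting finite). The engine of the whole argument is the single structural fact already recalled above, namely the $\tau$-tilting analogue of Bongartz completion from \cite[Theorem 0.2]{AIR}: every $\tau$-rigid $A$-module is a direct summand of some basic $\tau$-tilting $A$-module. Everything else reduces to elementary counting once one notes two bookkeeping facts: a basic $\tau$-tilting module has exactly $|A|$ pairwise non-isomorphic indecomposable summands, and any direct summand of a $\tau$-rigid module is again $\tau$-rigid (because $\mathsf{Hom}_A(M,\tau M)=0$ forces $\mathsf{Hom}_A(M_i,\tau M_i)=0$ on each summand $M_i$). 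Consequently, every indecomposable summand of a (support) $\tau$-tilting module lies in $\mathsf{\tau\text{-}rigid}\ A$.

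For the first implication I would argue as follows. By the completion theorem each indecomposable $\tau$-rigid module occurs as a summand of some basic $\tau$-tilting module; if there are only finitely many basic $\tau$-tilting modules and each supplies at most $|A|$ indecomposable summands, then $\mathsf{\tau\text{-}rigid}\ A$ is finite. For the second implication, set $N=|\mathsf{\tau\text{-}rigid}\ A|$: the module part of any basic support $\tau$-tilting module is $\tau$-rigid, hence a direct sum of at most $|A|$ distinct members of $\mathsf{\tau\text{-}rigid}\ A$, and since by \cite{AIR} a support $\tau$-tilting module determines its support $\tau$-tilting pair, the module itself labels the whole object; thus $|\mathsf{s\tau\text{-}tilt}\ A|\le \sum_{k=0}^{|A|}\binom{N}{k}<\infty$. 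The last implication is immediate, since every $\tau$-tilting module is support $\tau$-tilting (take the idempotent $e=0$), so finiteness of $\mathsf{s\tau\text{-}tilt}\ A$ forces finiteness of the set of basic $\tau$-tilting modules.

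The only step that uses genuine theory rather than counting is the first implication, and this is where I expect the main subtlety to sit: the completion theorem is exactly what guarantees that every $\tau$-rigid module is \emph{visible} inside some $\tau$-tilting module, so that control over the finitely many $\tau$-tilting modules propagates to control over all $\tau$-rigid modules. Without it, finiteness of the $\tau$-tilting modules would carry no information about the total supply of $\tau$-rigid indecomposables. The remaining content is simply the observation that each of these three combinatorial objects is assembled from at most $|A|$ indecomposable $\tau$-rigid building blocks, so finiteness of the building-block set and finiteness of the assembled objects are equivalent.
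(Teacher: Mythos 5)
Your argument is correct, and it is essentially the standard one: the paper itself offers no proof but simply cites \cite[Corollary 2.9]{DIJ-tau-tilting-finite}, and the counting argument you give (Bongartz-type completion from \cite[Theorem 0.2]{AIR} to bound $\mathsf{\tau\text{-}rigid}\ A$ by $|A|$ times the number of basic $\tau$-tilting modules, then $\#\mathsf{s\tau\text{-}tilt}\ A\le\sum_{k\le |A|}\binom{N}{k}$, then the trivial inclusion of $\tau$-tilting modules into support $\tau$-tilting modules) is exactly how the equivalence is established in the literature. The only step deserving an explicit citation rather than the phrase ``by definition'' is your claim that the module part of a support $\tau$-tilting module is $\tau$-rigid over $A$ itself: since the definition only says $M$ is $\tau$-tilting over $A/AeA$ and the Auslander--Reiten translate changes when passing to the quotient, one needs the reduction lemma of \cite{AIR} identifying $\tau$-rigidity over $A/AeA$ with $\tau$-rigidity over $A$ for modules killed by $e$; this is true and standard, so it is a citation gap rather than a mathematical one.
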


One of the important properties of support $\tau$-tilting modules is that the set $\mathsf{s\tau\text{-}tilt}\ A$ admits a partial order $\leq $, which is compatible with the left/right mutations. For any $M,N\in \mathsf{s\tau\text{-}tilt}\ A$, we say $N\leq M$ if $\mathsf{Fac}(N) \subseteq \mathsf{Fac}(M)$. Then, we have

\begin{definitiontheorem}[{\cite[Theorem 2.33]{AIR}}]
For any $M,N\in \mathsf{s\tau\text{-}tilt}\ A$, the following conditions are equivalent:
\begin{enumerate}
  \item $N$ is a left mutation of $M$.
  \item $M$ is a right mutation of $N$.
  \item $M>N$ and there is no $T\in \mathsf{s\tau\text{-}tilt}\ A$ such that $M>T>N$.
\end{enumerate}
\end{definitiontheorem}

It is clear that the Hasse quiver $\mathcal{H}(\mathsf{s\tau\text{-}tilt}\ A)$ of the poset $\mathsf{s\tau\text{-}tilt}\ A$ is constructed by drawing an arrow from $M$ to $N$ if $N$ is a left mutation of $M$. We have known from \cite[Corollary 2.38]{AIR} that $\mathcal{H}(\mathsf{s\tau\text{-}tilt}\ A)$ will exhaust all support $\tau$-tilting modules if it contains a finite connected component. If this is the case, then $A$ is $\tau$-tilting finite.

\subsection{Brauer graph algebras}
We follow the constructions in \cite{Schroll-Brauer-graph}. A \emph{Brauer graph} $G=(G_0,G_1,m,\mathfrak{o})$ is a finite (undirected) connected graph $(G_0,G_1)$ equipped with a multiplicity function $m: G_0\rightarrow \mathbb{Z}_{>0}$ to each vertex in $G_0$, and a cyclic ordering $\mathfrak{o}$ of the edges in $G_1$ around each vertex. By a cyclic ordering here, we have the following:
\begin{itemize}
  \item suppose $v\in G_0$ is a vertex incident to multiple edges $E_1, E_2, \cdots, E_s \in G_1$. Then, the cyclic ordering around vertex $v$ is given by $E_{i_1}<E_{i_2}<\cdots<E_{i_s}<E_{i_1}$, for $i_1,i_2,\cdots,i_s\in \{1,2,\cdots, s\}$;
  \item suppose $v\in G_0$ is a vertex incident to a single edge $E\in G_1$. Then, the cyclic ordering around vertex $v$ is given by $E$ if $m(v)=1$, and by $E<E$ if $m(v)>1$.
\end{itemize}
A \emph{Brauer tree} is a Brauer graph $G$ such that $(G_0,G_1)$ is a tree and $m(v)=1$ for all but at most one vertex $v\in G_0$. If such a vertex exists, then it is called an \emph{exceptional vertex} of the Brauer tree.

We may associate a quiver $Q_G$ with a Brauer graph $G$, where the vertices in $Q_G$ correspond to the edges in $G$ and the arrows in $Q_G$ are induced by the cyclic ordering $\mathfrak{o}$. Let $i$ and $j$ be two distinct vertices in $Q_G$ corresponding to edges $E_i$ and $E_j$ in $G$. Then, there is an arrow $i\overset{\alpha}{\longrightarrow }j$ in $Q_G$ if $E_i<E_j$ appears in the cyclic ordering around their common vertex in $G$, and there is no other $E_k$ satisfying $E_i<E_k<E_j$. In the case of $i=j$, $i\overset{\alpha}{\longrightarrow }i$ is exactly a loop in $Q_G$. It turns out that each vertex $v\in G_0$ gives rise to an oriented cycle in $Q_G$ unless $v$ is a vertex incident to a single edge and $m(v)=1$. By the construction of $Q_G$, there are at most two arrows starting (or ending) at each vertex $i$ in $Q_G$, and $i$ is included at most two oriented cycles in $Q_G$.

We define an admissible ideal $I_G$ of $KQ_G$, which is generated by the following relations:
\begin{itemize}
  \item $C_v^{m(v)}-C_{v'}^{m(v')}$, where $C_v$ and $C_{v'}$ are two distinct oriented cycles in $Q_G$ starting at a vertex $i\in Q_G$, $v$ and $v'$ are vertices in $G$ incident to the edge $E_i$ in $G$;
  \item $C_v^{m(v)}\alpha_1$, where $C_v=\alpha_1\alpha_2\cdots \alpha_k$ is any oriented cycle in $Q_G$ starting at a vertex $i\in Q_G$, $v$ is one of vertices in $G$ incident to the edge $E_i$ in $G$;
  \item $\alpha\beta$, where $\alpha\beta$ is not contained in any oriented cycle in $Q_G$.
\end{itemize}
Then, the bound quiver algebra $B_G=KQ_G/I_G$ is called the \emph{Brauer graph algebra} associated with Brauer graph $G$.

It is well-known that a Brauer graph algebra $B_G$ is exactly a symmetric special biserial algebra, and $B_G$ is of finite representation type if and only if $G$ is a Brauer tree. We may look at some examples.
\begin{example}
Let $G=(G_0,G_1,m,\mathfrak{o})$ be a Brauer graph and $B_G$ its Brauer graph algebra. We set $m(v)=1$ for any vertex $v\in G_0$ (in the sense of both Proposition \ref{prop-Brauer-graph} and Proposition \ref{prop-trivial-extension-of-gentle-alg} below). We assume that the cyclic ordering $\mathfrak{o}$ around each vertex is taken clockwise. Then,
\begin{enumerate}
  \item $B_G$ is called a \emph{Brauer line algebra} if $(G_0,G_1)$ is of the form
       \begin{center}
       $\xymatrix@C=1cm@R=1cm{\circ\ar@{-}[r]^{1}&\circ\ar@{-}[r]^{2}&\circ\ar@{-}[r]^{3}&\cdots \ar@{-}[r]^{n-1} &\circ\ar@{-}[r]^{n}&\circ}$.
       \end{center}
  In this case, $B_G=KQ_G/I_G$ is presented by
       \begin{center}
       $Q_G: \xymatrix@C=1.2cm@R=0.3cm{1\ar@<0.5ex>[r]^{\alpha_1}&2\ar@<0.5ex>[l]^{\beta_1}\ar@<0.5ex>[r]^{\alpha_2}&\cdots
       \ar@<0.5ex>[l]^{\beta_2}\ar@<0.5ex>[r]^{\alpha_{n-2}}&n-1\ar@<0.5ex>[l]^{\beta_{n-2}}\ar@<0.5ex>[r]^{\ \ \alpha_{n-1}}&n\ar@<0.5ex>[l]^{\ \ \beta_{n-1}}}$, and
       \end{center}
       \begin{center}
       $I_G: \left \langle \alpha_i\alpha_{i+1},\beta_{i+1}\beta_i, \beta_i\alpha_i-\alpha_{i+1}\beta_{i+1} \mid 1\leqslant i\leqslant n-2 \right \rangle.$
       \end{center}

  \item $B_G$ is called a \emph{Brauer star algebra} if $(G_0,G_1)$ is of the form
       \begin{center}
       $\vcenter{\xymatrix@C=0.8cm@R=1cm{\circ&\cdots &\circ\\
        \circ&\circ\ar@{-}[l]_{1}\ar@{-}[lu]_{2}\ar@{-}[ru]^{n-1}\ar@{-}[r]^{n}&\circ}}$.
       \end{center}
  In this case, $B_G=KQ_G/I_G$ is presented by
       \begin{center}
       $Q_G: \vcenter{\xymatrix@C=0.7cm@R=0.3cm{ &2\ar[r]^{\alpha}&3\ar[r]^{\alpha}&\cdots \ar[r]^{\alpha}&\circ\ar[rd]^{\alpha} & \\
       1\ar[ru]^{\alpha}&&&&&\circ\ar[dl]^{\alpha} \\
       &n\ar[lu]^{\alpha}&n-1 \ar[l]^{\alpha}&\cdots \ar[l]^{\alpha} &\circ\ar[l]^{\alpha}  & }}$ and $I_G: \left \langle \alpha^{n+1} \right \rangle$.
       \end{center}
       Note that $B_G$ is a symmetric Nakayama algebra, and any symmetric Nakayama algebra is obtained in this way.

  \item $B_G$ is called a \emph{Brauer cycle algebra} if $(G_0,G_1)$ is of the form
       \begin{center}
       $\vcenter{\xymatrix@C=1cm@R=0.3cm{ &\circ\ar@{-}[r]^{2}&\circ\ar@{-}[r]^{3}&\cdots \ar@{-}[r] &\circ\ar@{-}[rd] & \\
       \circ\ar@{-}[ru]^{1}\ar@{-}[rd]^{n}&&&&&\circ\\
       &\circ\ar@{-}[r]^{n-1}&\circ\ar@{-}[r]^{n-2}&\cdots \ar@{-}[r] &\circ\ar@{-}[ru] & }}$.
       \end{center}
  In this case, $B_G=KQ_G/I_G$ is presented by
       \begin{center}
       $Q_G: \vcenter{\xymatrix@C=1.2cm@R=0.5cm{ &2\ar@<0.5ex>[r]^{\alpha_2}\ar@<0.5ex>[ld]^{\beta_1}&3\ar@<0.5ex>[l]^{\beta_2}\ar@<0.5ex>[r]^{\alpha_3}&\cdots \ar@<0.5ex>[l]^{\beta_3}\ar@<0.5ex>[r]&\circ\ar@<0.5ex>[l] \ar@<0.5ex>[rd] & \\
       1\ar@<0.5ex>[ru]^{\alpha_1}\ar@<0.5ex>[rd]^{\beta_n}&&&&&\circ\ar@<0.5ex>[ld]\ar@<0.5ex>[lu] \\
       &n\ar@<0.5ex>[lu]^{\alpha_n}\ar@<0.5ex>[r]^{\beta_{n-1}}&n-1\ar@<0.5ex>[r] \ar@<0.5ex>[l]^{\alpha_{n-1}}&\cdots \ar@<0.5ex>[l] \ar@<0.5ex>[r] &\circ\ar@<0.5ex>[l]\ar@<0.5ex>[ru]  & }}$, and
       \end{center}
       \begin{center}
       $I_G: \left \langle \alpha_n\alpha_1, \alpha_i\alpha_{i+1},\beta_1\beta_n,\beta_{i+1}\beta_i, \beta_n\alpha_n-\alpha_1\beta_1, \beta_i\alpha_i-\alpha_{i+1}\beta_{i+1} \mid 1\leqslant i\leqslant n-1 \right \rangle.$
       \end{center}
\end{enumerate}
\end{example}

We recall some results on Brauer graph algebras which are related to $\tau$-tilting theory.
\begin{proposition}[{\cite[Theorem 1.1]{AAC-Brauer-graph-alg}}]\label{prop-Brauer-graph}
Let $G=(G_0,G_1,m,\mathfrak{o})$ be a Brauer graph and $B_G$ its Brauer graph algebra. Then, we have
\begin{enumerate}
  \item $B_G$ is $\tau$-tilting finite if and only if $G$ contains at most one cycle of odd length and no cycle of even length.
  \item $\mathsf{s\tau\text{-}tilt}\ B_G\simeq \mathsf{s\tau\text{-}tilt}\ B_{G'}$ as posets if $(G_0,G_1,\mathfrak{o})$ and $({G'}_0,{G'}_1, \mathfrak{o}')$ coincide.
\end{enumerate}
\end{proposition}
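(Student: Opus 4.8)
The plan is to phrase everything through the two-term silting formalism. Since $B_G$ is a finite-dimensional symmetric algebra, I would first invoke the bijection of \cite{AIR} between $\mathsf{s\tau\text{-}tilt}\ B_G$ and the set of two-term silting complexes in the bounded homotopy category $K^b(\mathsf{proj}\ B_G)$, which is moreover an isomorphism of posets; for symmetric algebras two-term silting complexes coincide with two-term tilting complexes. Thus both claims become statements about two-term (pre)silting complexes, whose indecomposable summands are two-term projective presentations $P_1\to P_0$ and whose combinatorics I would read off the Brauer graph $G$.

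For statement (2), the goal is to show that the poset of two-term silting complexes does not see the multiplicity function $m$. First I would set up the combinatorial model: each indecomposable two-term presilting complex is encoded by a walk (a ``string'') on $G$ together with its position relative to the cyclic orderings $\mathfrak{o}$ at the incident vertices, and the mutation/exchange rule governing the Hasse arrows is determined entirely by these local cyclic orderings. The point is that $m$ enters the defining ideal $I_G$ only through the relations $C_v^{m(v)}$ and $C_v^{m(v)}\alpha_1$, which merely truncate paths that wind around a cycle $m(v)$ times; such windings never occur inside a two-term complex. Hence the map $P_1\to P_0$, the relevant $\mathsf{Hom}$-spaces between indecomposable projectives, and the resulting exchange combinatorics are identical for $B_G$ and for $B_{G'}$ obtained by changing $m$ (in particular by setting $m\equiv 1$). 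This would give an order isomorphism $\mathsf{s\tau\text{-}tilt}\ B_G\simeq \mathsf{s\tau\text{-}tilt}\ B_{G'}$, and in particular reduces (1) to the case $m\equiv 1$.

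For statement (1), working now with $m\equiv 1$, I would decide when there are infinitely many indecomposable $\tau$-rigid modules, which by \cite{DIJ-tau-tilting-finite} is equivalent to $\tau$-tilting infiniteness. Since $B_G$ is special biserial, its indecomposables are string and band modules; band modules come in one-parameter families and are never $\tau$-rigid, but their presence forces infinitely many $\tau$-rigid string modules obtained by ``unwinding'' the band. I would argue that bands correspond precisely to cycles in $G$, and that an even cycle, or two independent odd cycles, produces such an infinite family of pairwise non-isomorphic $\tau$-rigid strings, proving $\tau$-tilting infiniteness. Conversely, when $G$ has no even cycle and at most one odd cycle, I would show that the two-term silting poset is connected and finite --- for instance by establishing silting-discreteness via iterated mutation and then appealing to the completeness of the Hasse quiver noted after \cite[Corollary 2.38]{AIR} --- so that $B_G$ is $\tau$-tilting finite.

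The main obstacle will be the rigorous justification of the combinatorial model in the second step: proving that the assignment from walks-with-orderings to two-term presilting complexes is a bijection compatible with mutation, and in particular that no winding path ever contributes, so that the multiplicity genuinely drops out. The counting in the infinite case is routine once the model is in place, but establishing finiteness (or discreteness) in the ``good'' case --- rather than merely the absence of one obvious infinite family --- is the other delicate point, since it requires controlling all mutation sequences simultaneously rather than a single family of modules.
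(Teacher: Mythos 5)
The paper does not actually prove this proposition: it is imported verbatim from \cite[Theorem 1.1]{AAC-Brauer-graph-alg} and used as a black box, so there is no internal argument to compare yours against. Judged on its own terms, your outline for part (2) is plausible and consistent with the strategy of the cited source: for a symmetric algebra, support $\tau$-tilting modules correspond to two-term tilting complexes, and the multiplicity function enters the relations only through the powers $C_v^{m(v)}$, which a two-term complex cannot detect; making the walk model rigorous is real work, but the idea is sound.

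Part (1), however, has a genuine gap. Your proposed mechanism for $\tau$-tilting infiniteness --- bands exist, bands are not $\tau$-rigid, but ``unwinding'' a band produces infinitely many pairwise non-isomorphic $\tau$-rigid string modules --- proves too much. A Brauer graph consisting of a single odd cycle already yields a representation-infinite special biserial algebra possessing band modules (these are exactly the Brauer cycle algebras of the paper), yet by the statement you are trying to prove, and by Proposition \ref{prop-Brauer-cycle} and Corollary \ref{cor-Brauer-cycle}, such algebras are $\tau$-tilting finite. So the mere presence of a band cannot force $\tau$-tilting infiniteness, and your sketch gives no indication of where the parity of the cycle --- which is the entire content of the dichotomy --- would enter. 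The proof in the cited source encodes indecomposable two-term presilting complexes by \emph{signed} walks on $G$, and it is precisely an even cycle (or a pair of odd cycles, whose concatenation behaves like an even one) that admits infinitely many compatible sign assignments; without some such sign bookkeeping the odd/even distinction is invisible to your argument. The finiteness direction is likewise deferred (``establishing silting-discreteness via iterated mutation'' restates the goal rather than achieving it), which you acknowledge; but the parity issue is the more serious defect, since the infiniteness argument as written would contradict half of the statement being proved.
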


Although the $\tau$-tilting finiteness of a Brauer graph algebra $B_G$ is known, the number $\#\mathsf{s\tau\text{-}tilt}\ B_G$ for a $\tau$-tilting finite algebra $B_G$ is not clear in general. We only know the number $\#\mathsf{s\tau\text{-}tilt}\ B_G$ for Brauer tree algebras and $\tau$-tilting finite Brauer cycle algebras, as displayed below.

\begin{proposition}[{\cite[Theorem 1.1]{AMN-Brauer-tree} or \cite[Theorem 1.1]{Aoki-Brauer-tree}}]\label{prop-Brauer-tree}
Let $G$ be a Brauer tree with $n$ edges and $B_G$ its Brauer tree algebra. Then, we have $\#\mathsf{s\tau\text{-}tilt}\ B_G=\binom{2n}{n}$.
\end{proposition}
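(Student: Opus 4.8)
The plan is to move from modules to complexes and then exploit the symmetry of $B_G$. Since a Brauer graph algebra is symmetric special biserial, the bijection of \cite{AIR} between $\mathsf{s\tau\text{-}tilt}\ B_G$ and the set of two-term silting complexes in $\mathsf{K}^{\mathrm b}(\mathsf{proj}\ B_G)$ reduces the problem to counting the latter. First I would normalise using Proposition \ref{prop-Brauer-graph}(2): the cardinality of $\mathsf{s\tau\text{-}tilt}\ B_G$ is insensitive to the multiplicity function $m$, so I may take $m\equiv 1$ (which is anyway the relevant situation here, where the tree has no exceptional vertex). The real content of the statement is then twofold: the answer should depend only on the number $n$ of edges and not on the shape of the tree, and that common value should be $\binom{2n}{n}$.

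For the count I would use the sign-decomposition technique of \cite{Aoki-sign-decomposition}. Writing $S_1,\dots,S_n$ for the simple $B_G$-modules (one per edge), one partitions the two-term silting complexes according to the sign vector $\varepsilon\in\{+,-\}^n$ that records, coordinatewise, the sign of the $g$-vector. Each such class is governed by a symmetric algebra $B_G^{\varepsilon}$ obtained by an $\varepsilon$-reduction of $B_G$, so one is reduced to counting constant-sign silting complexes over each $B_G^{\varepsilon}$. The structural point to establish is that, for a Brauer tree, every reduced algebra $B_G^{\varepsilon}$ is again a product of Brauer graph algebras supported on subtrees of $G$; combined with the multiplicity-insensitivity of Proposition \ref{prop-Brauer-graph}(2), this yields a recursion on $n$.

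To close the recursion I would use the base case $n=1$, where $B_G\cong K[x]/(x^2)$ has exactly the two support $\tau$-tilting modules $B_G$ and $0$, so the count is $2=\binom{2}{1}$. Summing the sign-class contributions then reproduces the Vandermonde identity $\sum_{k=0}^{n}\binom{n}{k}^2=\binom{2n}{n}$, the squared binomials recording the pairing of the independent $g$-vector sign choices supplied by the subtrees appearing in each reduction. As a sanity check the next case $n=2$ gives $6$, matching $\binom{4}{2}$.

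The main obstacle is \emph{shape-independence}. Proposition \ref{prop-Brauer-graph}(2) neutralises only the multiplicity and the cyclic ordering, not the combinatorial type of the tree, so the heart of the argument is showing that the count is blind to whether $G$ is a line, a star, or any other tree with $n$ edges. This is precisely where the control of the reduced algebras $B_G^{\varepsilon}$ is delicate: one must verify that the decomposition into subtrees depends, at each stage, only on the edge-counts of the pieces and not on how they are attached to one another. A convenient way to make this transparent is to organise the induction by peeling a leaf edge $E$ and relating $\mathsf{s\tau\text{-}tilt}\ B_G$ to that of the quotient $B_G/B_Ge_EB_G$ along the idempotent $e_E$; once the invariance of the reduced pieces under such moves is secured, the recursion closes uniformly and the value $\binom{2n}{n}$ follows.
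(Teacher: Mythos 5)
First, a point of comparison: the paper does not prove this proposition at all. It is quoted from \cite{AMN-Brauer-tree} and \cite{Aoki-Brauer-tree} and used as a black box whose only role is to feed Corollary \ref{cor-main-result}. So there is no internal proof to measure your attempt against; what you have written is an outline of the strategy of \cite{Aoki-Brauer-tree} (sign decomposition of two-term silting complexes), not of \cite{AMN-Brauer-tree} (simplicial complexes).

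Judged as a proof, your proposal has genuine gaps, and they are exactly the two steps you flag and then do not carry out. (i) You assert that each reduced algebra $B_G^{\varepsilon}$ is a product of Brauer graph algebras supported on subtrees of $G$, with isomorphism types controlled only by edge counts; you call this ``the structural point to establish'' and then never establish it. Without it there is no recursion. (ii) Even granting (i), the arithmetic is not closed: to arrive at $\sum_{k=0}^{n}\binom{n}{k}^2=\binom{2n}{n}$ you must determine, for each of the $2^n$ sign vectors $\varepsilon$, exactly how many two-term silting complexes have $g$-vector of sign $\varepsilon$ (the actual theorem behind this count is that a sign vector with $k$ positive entries contributes exactly $\binom{n}{k}$, independently of the shape of the tree, and proving that is the entire difficulty); the phrase ``the squared binomials recording the pairing of the independent $g$-vector sign choices'' is a gloss, not an argument. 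The shape-independence you correctly single out as the main obstacle is precisely what (i) and (ii) together would deliver, so as written this is a plan for a proof rather than a proof. The parts that do work are the reduction via the bijection of \cite{AIR}, the use of Proposition \ref{prop-Brauer-graph}(2) to discard the multiplicity function (and hence the exceptional vertex), the base case $n=1$, and the check at $n=2$; none of these touches the core of the statement.
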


\begin{proposition}[{\cite[Theorem 5.12]{Aoki-sign-decomposition}}]\label{prop-Brauer-cycle}
Let $B_G$ be a Brauer cycle algebra with $n$ simples. If $n$ is odd, then we have $\#\mathsf{s\tau\text{-}tilt}\ B_G=2^{2n-1}$.
\end{proposition}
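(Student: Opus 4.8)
The plan is to count via the Adachi--Iyama--Reiten bijection between $\mathsf{s\tau\text{-}tilt}\,B_G$ and the (isomorphism classes of) two-term silting complexes in $K^{b}(\mathsf{proj}\,B_G)$, and then to reduce the cyclic combinatorics to the linear case by a sign decomposition in the sense of \cite{Aoki-sign-decomposition}. First I would record that the hypothesis that $n$ is odd is precisely the condition under which $B_G$ is $\tau$-tilting finite, by Proposition~\ref{prop-Brauer-graph}(1) (the graph is a single cycle of odd length), so that the number in question is finite and the sum below is legitimate. Encoding each two-term silting complex by the $g$-vectors of its indecomposable summands, which form a $\mathbb{Z}$-basis of $\mathbb{Z}^{n}$, I would partition the silting complexes according to a sign vector $\epsilon\in\{+,-\}^{n}$, with one entry attached to each of the $n$ simples, equivalently to each edge of the cycle.

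The crucial step is to run the sign decomposition on $B_G$: for a fixed $\epsilon$, the two-term silting complexes in the corresponding class should be identified with the two-term silting complexes of an $\epsilon$-reduced algebra, and for the Brauer cycle this reduced algebra is built from the arcs into which the sign pattern cuts the cycle. Each such arc is a Brauer line algebra, whose number of support $\tau$-tilting modules is a central binomial coefficient by Proposition~\ref{prop-Brauer-tree}: an arc carrying $m$ edges contributes $\binom{2m}{m}$ (with the degenerate empty arc contributing $\binom{0}{0}=1$, the count of the zero algebra). In this way every contribution to $\#\mathsf{s\tau\text{-}tilt}\,B_G$ becomes a product of central binomial coefficients indexed by how the cycle has been opened.

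Assembling these contributions, the count is expected to collapse to the single-parameter sum
\[
\#\mathsf{s\tau\text{-}tilt}\,B_G=\tfrac{1}{2}\sum_{k=0}^{n}\binom{2k}{k}\binom{2(n-k)}{n-k},
\]
in which the $k$-th term records a splitting of the $n$ edges into two arcs of sizes $k$ and $n-k$, and the factor $\tfrac12$ removes the double count coming from the symmetry $k\leftrightarrow n-k$. This is then evaluated by the classical central-binomial convolution $\sum_{k=0}^{n}\binom{2k}{k}\binom{2(n-k)}{n-k}=4^{n}$, read off by squaring the generating function $(1-4x)^{-1/2}$, so that $\#\mathsf{s\tau\text{-}tilt}\,B_G=\tfrac12\cdot 4^{n}=2^{2n-1}$. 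Here $n$ odd is used a second time: it guarantees that the $n+1$ summands pair off under $k\leftrightarrow n-k$ with no fixed central term, which is exactly what makes the halving produce an integer in the clean form $2^{2n-1}$.

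I expect the main obstacle to be the middle step rather than the arithmetic. Making the sign decomposition rigorous for $B_G$ means proving that each $\epsilon$-class is genuinely in bijection with the two-term silting complexes of the cut-open algebra, that these reduced algebras are exactly Brauer line algebras of the predicted sizes, and---most delicately---that silting complexes whose $g$-cones straddle several sign orthants are counted exactly once, so that the full decomposition reassembles into the single sum above without over- or under-counting. This forces one to track $g$-vectors through the relations of $I_G$ and, in particular, to control how the two concentric orientations $\alpha_i$ and $\beta_i$ of the cycle interact at the points where the sign pattern changes. By comparison, the concluding binomial identity is routine.
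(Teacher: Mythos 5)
First, a point of comparison: the paper does not actually prove this proposition --- it is quoted directly from \cite[Theorem 5.12]{Aoki-sign-decomposition}, so the only ``proof'' in the paper is the citation. Your overall strategy (the Adachi--Iyama--Reiten bijection plus sign decomposition) is indeed the method of the cited source, your observation that $n$ odd is exactly the $\tau$-tilting finiteness condition via Proposition \ref{prop-Brauer-graph}(1) is correct, and the concluding identity $\sum_{k=0}^{n}\binom{2k}{k}\binom{2(n-k)}{n-k}=4^{n}$ is a correct piece of arithmetic.

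The middle step, however, is not just delicate but inconsistent as written. Sign decomposition partitions the two-term silting complexes into $2^{n}$ classes indexed by $\epsilon\in\{+,-\}^{n}$ and identifies the $\epsilon$-class with the \emph{$\epsilon$-class} of a reduced algebra $A_{\epsilon}$ --- not with all of $\mathsf{s\tau\text{-}tilt}\ A_{\epsilon}$. Your accounting instead lets an arc of $m$ edges contribute the full central binomial $\binom{2m}{m}$, i.e.\ the total count for a Brauer line with $m$ edges from Proposition \ref{prop-Brauer-tree}; this over-counts immediately (for $n=3$ the two constant sign vectors alone would contribute $2\binom{6}{3}=40>32=2^{5}$). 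Moreover, a general sign vector cuts the cycle into any even number of arcs, so the sum over $2^{n}$ classes cannot simply collapse to your $(n+1)$-term sum over splittings into two arcs; and even read as a sum over two-arc cuttings of a labelled $n$-cycle, each unordered pair $\{k,n-k\}$ arises from $n$ cut positions rather than one, so the factor $\tfrac12$ does not implement the symmetry you invoke. The claimed ``second use'' of $n$ odd is also spurious: $4^{n}/2=2^{2n-1}$ is an integer for every $n$, and the formula fails for even $n$ only because $B_G$ is then $\tau$-tilting infinite --- a sign that your model does not track where parity genuinely enters. What is missing is the real content of Aoki's argument: an explicit description of the reduced algebras $A_{\epsilon}$ and a computation of the size of each of the $2^{n}$ sign classes, whose sum is $2^{2n-1}$; your final identity is correct but is not derived from the decomposition you set up.
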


\subsection{Gentle algebras}\label{subsection-2.3}

We recall the definition of gentle algebras from \cite{AS-tilting-cotilting-equ}. A finite-dimensional algebra $A$ is called a \emph{gentle algebra} if it is Morita equivalent to a bound quiver algebra $KQ/I$, where the following conditions hold:
\begin{itemize}
\item $Q=(Q_0,Q_1,s,t)$ is a finite quiver such that for every vertex $i$ in $Q_0$, there are at most two arrows ending at $i$ and at most two arrows starting at $i$;
\item for each arrow $\alpha$ in $Q_1$, there is at most one arrow $\beta$ such that $t(\alpha)=s(\beta)$ and $\alpha\beta\in I$, and there is at most one arrow $\gamma$ such that $t(\gamma)=s(\alpha)$ and $\gamma\alpha\in I$;
\item for each arrow $\alpha$ in $Q_1$, there is at most one arrow $\beta'$ such that $t(\alpha)=s(\beta')$ and $\alpha\beta'\not\in I$, and there is at most one arrow $\gamma'$ such that $t(\gamma')=s(\alpha)$ and $\gamma'\alpha\not\in I$;
\item $I$ is an admissible ideal of $KQ$ generated by a set of paths of length 2.
\end{itemize}
Then, a gentle algebra $A=KQ/I$ is called a \emph{gentle tree algebra} if $Q$ is a tree.

Gentle algebras are well-studied in the aspects of $\tau$-tilting theory. For example, we have known from \cite{P-gentle} or \cite{Mousavand-biserial-alg} that a gentle algebra $A$ is $\tau$-tilting finite if and only if it is of finite representation type; several combinatorial or geometric tools were constructed in \cite{BDMTY-gentle, OPS-gentle, PPP-gentle}, etc. to classify support $\tau$-tilting $A$-modules; the structure of $\mathcal{H}(\mathsf{s\tau\text{-}tilt}\ A)$ has been studied in \cite{FGLZ-gentle} and so on. We point out that the number $\#\mathsf{s\tau\text{-}tilt}\ A$ for a $\tau$-tilting finite gentle algebra $A$ is not obvious, but one may use the String Applet (see \cite{G-string applet}) to calculate it.

We now focus on the trivial extensions of gentle algebras. We recall the constructions in \cite{Schroll-trivial-extension}. Let $A=KQ/I$ be a gentle algebra and $\mathcal{P}$ the set of non-trivial paths of $A$. We define $\mathcal{M}$ to be the set of maximal paths in $\mathcal{P}$, that is, all paths $p\in \mathcal{P}$ such that $\alpha p\not\in \mathcal{P}$ and $p\alpha \not\in \mathcal{P}$ for all arrows $\alpha\in Q$. By the definition of gentle algebras, every arrow $\alpha\in Q$ is contained in a unique maximal path, and every vertex $i\in Q$ lies in at most two maximal paths. Then, we define $\mathcal{M}_0$ to be the set of trivial paths $e_i$ associated with vertices $i\in Q$, where $i$ satisfies one of the following conditions:
\begin{itemize}
\item $i$ is a source and there is a single arrow starting at $i$;
\item $i$ is a sink and there is a single arrow ending at $i$;
\item there is a single arrow $\alpha$ ending at $i$ and a single arrow $\beta$ starting at $i$, and $\alpha\beta\not\in I$.
\end{itemize}
We set $\overline{\mathcal{M}}:=\mathcal{M}\cup \mathcal{M}_0$. Then, every vertex $i\in Q$ lies in exactly two elements in $\overline{\mathcal{M}}$ which are not necessarily distinct.

We may associate a graph $\Gamma_A$ with a gentle algebra $A=KQ/I$, where the vertices in $\Gamma_A$ correspond to the elements in $\overline{\mathcal{M}}$ and the edges in $\Gamma_A$ correspond to the vertices in $Q$. Let $v(p)$ be the vertex in $\Gamma_A$ corresponding to $p\in \overline{\mathcal{M}}$ and $E_i$ the edge in $\Gamma_A$ corresponding to $i\in Q$. As we mentioned above, each vertex $i\in Q$ lies in exactly two elements in $\overline{\mathcal{M}}$, say, $p$ and $q$, then we set $E_i$ as the edge in $\Gamma_A$ connecting $v(p)$ and $v(q)$.

It is shown in \cite[Lemma 3.1]{Schroll-trivial-extension} that each maximal path $p$ in $\mathcal{M}$ gives rise to a linear order of the edges connected to the corresponding vertex $v(p)$ in $\Gamma_A$. More precisely, let $p=i_1\overset{\alpha_1}{\longrightarrow }i_2\overset{\alpha_2}{\longrightarrow }\cdots \overset{\alpha_n}{\longrightarrow }i_{n+1}$ be a maximal path in $\mathcal{M}$, the linear order of all edges in $\Gamma_A$ connected to $v(p)$ is given by $E_{i_1}<E_{i_2}<\cdots<E_{i_{n+1}}$. Note that this linear order can be completed to a cyclic ordering around vertex $v(p)$ in $\Gamma_A$ by adding $E_{i_{n+1}}<E_{i_1}$ at the end. Then, we call $E_{i_1}<E_{i_2}<\cdots<E_{i_{n+1}}<E_{i_1}$ the cyclic ordering induced by $p$. The following result gives a connection between gentle algebras and Brauer graph algebras via trivial extensions.

\begin{proposition}[{\cite[Theorem 1.2]{Schroll-trivial-extension}}]\label{prop-trivial-extension-of-gentle-alg}
Let $A$ be a gentle algebra and $\Gamma_A$ the graph associated with $A$. We denote by $B_{\Gamma_A}$ the Brauer graph algebra defined on $\Gamma_A$ with $m(v)=1$ for every vertex $v\in \Gamma_A$ and with the cyclic ordering induced by maximal paths in $A$. Then, the trivial extension $T(A)$ of $A$ is isomorphic to $B_{\Gamma_A}$.
\end{proposition}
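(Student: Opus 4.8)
The plan is to realize both $B_{\Gamma_A}$ and $T(A)=A\ltimes DA$ as the same bound quiver algebra, by first matching their quivers and then matching the defining relations. Throughout I would use that $T(A)$ is a symmetric algebra with $\dim_K T(A)=2\dim_K A$, and that as an $A$-$A$-bimodule $DA$ is the $K$-dual of $A$.

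First I would compute the quiver $\widetilde{Q}$ of $T(A)$. Its vertex set is $Q_0$, and its arrows are those of $Q$ together with one new arrow for each element of a basis of the bimodule top $DA/(\mathsf{rad}\,A\cdot DA+DA\cdot\mathsf{rad}\,A)$. Dualizing, this top is identified with the bimodule socle of $A$, which for a gentle algebra is spanned exactly by the maximal paths $p\in\mathcal{M}$: a path is annihilated on both sides by $\mathsf{rad}\,A$ precisely when it cannot be prolonged within $A$. Hence each maximal path $p$ from $s(p)$ to $t(p)$ contributes a single new arrow $t(p)\to s(p)$, and $\widetilde{Q}$ has exactly $|Q_1|+|\mathcal{M}|$ arrows.

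Next I would check $\widetilde{Q}=Q_{\Gamma_A}$. The vertices of $Q_{\Gamma_A}$ are the edges of $\Gamma_A$, i.e. the vertices of $Q$, so the vertex sets agree. For a maximal path $p=i_1\xrightarrow{\alpha_1}\cdots\xrightarrow{\alpha_n}i_{n+1}$, the cyclic ordering induced by $p$ produces the arrows $E_{i_1}\to\cdots\to E_{i_{n+1}}\to E_{i_1}$ around $v(p)$; the first $n$ of these recover $\alpha_1,\dots,\alpha_n$ (every arrow of $Q$ lying in a unique maximal path), while the last is the closing arrow $t(p)\to s(p)$, matching the new arrow above. The trivial paths in $\mathcal{M}_0$ give univalent vertices of $\Gamma_A$ with $m\equiv1$, hence no oriented cycle and no extra arrow, consistent with the count. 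Thus the two quivers coincide.

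Finally I would match the relations, using $m\equiv1$ so that $I_{\Gamma_A}$ is generated by $C_v-C_{v'}$, by $C_v\alpha$, and by $\alpha\beta$ lying off every cycle. Each vertex $i\in Q_0$ lies in exactly two elements $p,q\in\overline{\mathcal{M}}$, yielding the two cycles $C_v,C_{v'}$ through $i$ in $Q_{\Gamma_A}$; in $T(A)$ both traverse a maximal path and close up into the one-dimensional socle of the projective--injective $e_iT(A)$, so after normalizing the dual basis they are equal, giving $C_v=C_{v'}$, while one further arrow overshoots the socle, giving $C_v\alpha=0$. A length-two path lying off every cycle is either a length-two relation of $A$ retained in $T(A)$ or an old--new composition incompatible with the bimodule action of $DA$, and so vanishes, giving $\alpha\beta=0$. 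Sending the generators of $B_{\Gamma_A}$ to the corresponding elements of $T(A)$ therefore respects all relations and defines a surjective algebra homomorphism, which is an isomorphism because both algebras have dimension $2\dim_K A$ --- for $T(A)$ this is immediate, and for $B_{\Gamma_A}$ it follows from a direct count of the basis paths determined by $\Gamma_A$. The main obstacle is the precise determination of the $A$-$A$-bimodule structure of $DA$: both reading off the new arrows and, above all, fixing the scalars so that the commutativity relation $C_v=C_{v'}$ holds on the nose require a coherent choice of dual basis compatible with the symmetrizing form of $T(A)$, and this normalization is the technical heart of the argument.
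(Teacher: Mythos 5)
This proposition is not proved in the paper at all: it is imported verbatim as \cite[Theorem 1.2]{Schroll-trivial-extension}, so there is no internal proof to compare against. Your reconstruction follows essentially the same route as Schroll's original argument (identify the quiver of $T(A)$ via the bimodule socle of $A$, match it arrow-by-arrow with $Q_{\Gamma_A}$, then check the three families of Brauer-graph relations and conclude by a dimension count), and it is correct in outline; if anything you overstate the difficulty of the normalization, since with the standard dual basis both cycles through a vertex $i$ evaluate to the functional $e_i^*$ on the nose.
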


\section{Main Results}\label{section-3}
Let $A$ be a finite-dimensional basic algebra over an algebraically closed field $K$. We recall from \cite{HR-tilted} that an $A$-module $T$ is called a \emph{tilting module} if $|T|=|A|$, $\mathsf{Ext}_A^1(T,T)=0$ and the projective dimension $\mathsf{pd}_A\ T$ of $T$ is at most one. Dually, an $A$-module $T$ is called \emph{cotilting} (see \cite[Section 4.1]{R-tubular}) if it satisfies $\left | T \right |=\left | A \right |$, $\mathsf{Ext}_A^1(T,T)=0$ and the injective dimension $\mathsf{id}_A\ T$ is at most one.

Let $K\Delta$ be a path algebra of a finite quiver $\Delta$ without oriented cycles. An algebra $A$ is called an \emph{iterated tilted algebra of type $\Delta$} (see \cite[(1.4)]{AH-generalized-tilted}) if there exists a sequence of algebras $K\Delta=A_0,A_1,\dots,A_m=A$ and a sequence of tilting modules $T_{A_i}^i$ with $0\leqslant i\leqslant m-1$, such that $A_{i+1}=\mathsf{End}_{A_i}\ T_{A_i}^i$, and $T_{A_i}^i$ is separating (that is, either $\mathsf{Hom}_{A_i}(T_{A_i}^i, M)=0$ or $\mathsf{Ext}_{A_i}^1(T_{A_i}^i, M)=0$ for any indecomposable $A_i$-module $M$). If $m\leqslant 1$, then $A$ is called a \emph{tilted algebra of type $\Delta$}.

There is a characterization of iterated tilted algebras given in \cite{HRS-iterated-tilted}. We recall that two algebras $A$ and $B$ are said to be \emph{tilting-cotilting equivalent} (see \cite{AS-tilting-cotilting-equ}) if there exists a sequence of algebras $A=A_0,A_1,\dots,A_m=B$ and a sequence of tilting or cotilting modules $T_{A_i}^i$ with $0\leqslant i\leqslant m-1$ such that $A_{i+1}=\mathsf{End}_{A_i}\ T_{A_i}^i$. Then, the main result in \cite{HRS-iterated-tilted} states that $A$ is iterated tilted of type $\Delta$ if and only if $A$ is tilting-cotilting equivalent to the path algebra $K\Delta$ (or, if and only if $A$ is derived equivalent to $K\Delta$). Before proving the main result, we need the following observation.

\begin{lemma}\label{lem-gentle-tree=iterated-titled}
An algebra $A$ is a gentle tree algebra if and only if $A$ is an iterated tilted algebra of type $\mathbb{A}_n$ for some $n\in \mathbb{N}$.
\end{lemma}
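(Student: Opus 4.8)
The plan is to reduce the statement to a derived-equivalence assertion and then match it against the known classification of iterated tilted algebras of type $\mathbb{A}_n$. By the characterization recalled above, $A$ is iterated tilted of type $\mathbb{A}_n$ if and only if $A$ is tilting-cotilting equivalent to $K\mathbb{A}_n$, equivalently derived equivalent to $K\mathbb{A}_n$ \cite{HRS-iterated-tilted}. So it suffices to identify the gentle tree algebras with the algebras in the derived equivalence class of $K\mathbb{A}_n$, where $n$ is the number of vertices of the quiver; note that this number is an invariant of the problem, since the tilting functors occurring in an iterated tilt preserve the number of simple modules.

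First I would settle the hereditary endpoint, which also serves as the base case of an induction. Suppose $A=KQ/I$ is a gentle tree algebra with $I=0$. Then for each arrow $\alpha$ the gentle axioms degenerate to the requirement that at most one arrow starts at $t(\alpha)$ and at most one arrow ends at $s(\alpha)$, because every length-two path lies outside $I=0$. Consequently a vertex carrying both an incoming and an outgoing arrow has exactly one of each, while a pure source or pure sink has degree at most two by the quiver condition. Hence every vertex of $Q$ has degree at most two, so the tree $Q$ is a path and $A$ is the path algebra of a quiver of type $\mathbb{A}_n$, which is tautologically iterated tilted of type $\mathbb{A}_n$; conversely $K\mathbb{A}_n$ is a gentle tree algebra.

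For the general case I would argue by induction on the number $r$ of relations generating $I$. For the forward direction (gentle tree $\Rightarrow$ iterated tilted of type $\mathbb{A}_n$) I would, when $r\geq 1$, exhibit a tilting or cotilting module whose endomorphism algebra is again a gentle tree algebra but with strictly fewer relations; the natural candidates are APR-type tilts at a suitable source or sink, or a reflection that ``uncrosses'' a degree-four branch vertex, exploiting that the gentle relations there pair each incoming arrow with a unique outgoing arrow. Iterating brings us to the hereditary case above. For the converse I would check that the class of gentle tree algebras is stable under the tilting and cotilting functors that generate the iterated tilted algebras of type $\mathbb{A}_n$ out of $K\mathbb{A}_n$.

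The main obstacle is the interaction between these tilting functors and the combinatorial gentle-tree structure: one must control how a single tilt transforms the bound quiver, verifying that it preserves both the gentleness conditions and the tree shape while lowering the relation count, and in particular that the branch vertices behave compatibly. Rather than grinding through this quiver surgery case by case, I would invoke the explicit classification of Assem and Happel \cite{AH-generalized-tilted}, which describes the bound quivers of all iterated tilted algebras of type $\mathbb{A}_n$; comparing their list with the defining axioms of gentle algebras \cite{AS-tilting-cotilting-equ} shows directly that these bound quivers are exactly the gentle algebras with tree quiver. This reduces the two inductions above to a finite bookkeeping check and isolates the genuine content of the lemma into the cited classification.
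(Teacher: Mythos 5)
Your proposal is correct in substance, but only because of its final paragraph; the route differs from the paper's in one direction. For ``iterated tilted of type $\mathbb{A}_n$ $\Rightarrow$ gentle tree'' you and the paper do the same thing: read off the bound quivers from the Assem--Happel classification \cite{AH-generalized-tilted} and compare with the gentle axioms. For the converse, however, the paper does \emph{not} appeal to the converse half of \cite{AH-generalized-tilted}: it invokes \cite[IX, Proposition 6.7]{ASS} to produce a chain $A=A_0,\dots,A_m=K\mathbb{A}_n$ with $A_{i+1}=\mathsf{End}_{A_i}T^i_{A_i}$ \emph{or} its opposite, then uses the duality $(\mathsf{End}_{A_i}T^i_{A_i})^{\mathsf{op}}\simeq \mathsf{End}_{A_i}D(T^i_{A_i})$ together with the fact that $D$ exchanges tilting and cotilting modules to convert that chain into a genuine tilting--cotilting equivalence, and only then applies \cite{HRS-iterated-tilted}. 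Your alternative --- citing the Assem--Happel theorem as a full ``if and only if'' characterization --- is legitimate (their main theorem is indeed an equivalence) and arguably more direct, at the price of making the lemma almost a restatement of that theorem. Two further remarks: your hereditary base case (a gentle tree algebra with $I=0$ has all vertices of degree at most two, hence is $K\mathbb{A}_n$) is correct but ends up unused; and the middle portion of your write-up --- the induction on the number of relations via APR tilts or reflections at branch vertices --- is never carried out and, as you yourself note, would require nontrivial control of how a single tilt transforms the bound quiver. Since you ultimately discard it in favour of the citation, it is dead weight rather than a gap, but the proof as submitted stands or falls entirely on the Assem--Happel classification being an equivalence, so that dependence should be stated explicitly rather than described as ``finite bookkeeping.''
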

\begin{proof}
If $A$ is an iterated tilted algebra of type $\mathbb{A}_n$, then we have an explicit characterization in \cite{AH-generalized-tilted} for the quiver and relations of $A$. Consequently, it is not difficult to find that $A$ is a gentle tree algebra.

Conversely, we assume that $A$ is a gentle tree algebra with $n$ simples. By \cite[IX, Proposition 6.7]{ASS}, there exists a sequence of algebras $A=A_0,A_1,\dots,A_m$ and a sequence of (separating) tilting modules $T_{A_i}^i$ with $0\leqslant i\leqslant m-1$ such that
\begin{center}
$A_{i+1}=\mathsf{End}_{A_i}\ T_{A_i}^i$ or $A_{i+1}=(\mathsf{End}_{A_i}\ T_{A_i}^i)^{\mathsf{op}}$,
\end{center}
and $A_m$ is a path algebra of type $\mathbb{A}_n$. Let $D=\mathsf{Hom}_K(-,K)$. Then, we have
\begin{center}
$(\mathsf{End}_{A_i}\ T_{A_i}^i)^{\mathsf{op}}\simeq \mathsf{End}_{A_i}\ D(T_{A_i}^i)$.
\end{center}
It is known by the definition that $T_{A_i}^i$ is a tilting module if and only if $D(T_{A_i}^i)$ is a cotilting module. Hence, we conclude that $A$ is tilting-cotilting equivalent to a path algebra of type $\mathbb{A}_n$. By the main result in \cite{HRS-iterated-tilted}, $A$ is an iterated tilted algebra of type $\mathbb{A}_n$.
\end{proof}

\begin{theorem}\label{main-result}
Let $T(A)$ be the trivial extension of $A$. Then, $A$ is a gentle tree algebra if and only if $T(A)$ is a Brauer tree algebra without exceptional vertex.
\end{theorem}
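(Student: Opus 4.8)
The plan is to route both implications through the Brauer-graph description of the trivial extension supplied by Proposition \ref{prop-trivial-extension-of-gentle-alg} (together with the converse correspondence recorded in the Introduction), thereby reducing the theorem to a single combinatorial identity comparing the cycle structure of the quiver $Q$ of $A$ with that of the associated graph $\Gamma_A$. The engine of the whole argument is the equality
\[
|\overline{\mathcal{M}}| = 2|Q_0| - |Q_1|,
\]
valid for every connected gentle algebra $A = KQ/I$, from which I deduce that the first Betti numbers of $Q$ and of $\Gamma_A$ coincide. Since $\Gamma_A$ always carries constant multiplicity $m \equiv 1$, the condition ``$\Gamma_A$ is a tree'' is literally the condition ``$B_{\Gamma_A}$ is a Brauer tree algebra without exceptional vertex,'' so matching Betti numbers forces $Q$ to be a tree exactly when $\Gamma_A$ is.

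First I would prove the displayed identity by double counting the incidences between vertices of $Q$ and elements of $\overline{\mathcal{M}}$. On one hand, the excerpt records that every vertex of $Q$ lies in exactly two members of $\overline{\mathcal{M}}$, so the number of incidences is $2|Q_0|$. On the other hand, a maximal path $p \in \mathcal{M}$ of length $\ell_p$ contributes $\ell_p + 1$ vertices while each trivial path $e_i \in \mathcal{M}_0$ contributes a single vertex; summing and using that every arrow lies in a unique maximal path (so $\sum_{p \in \mathcal{M}} \ell_p = |Q_1|$) gives $|Q_1| + |\overline{\mathcal{M}}|$ incidences. Equating the two counts yields the identity. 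I would also verify that $\Gamma_A$ is connected precisely when $Q$ is: an arrow $i \to j$ forces $i$ and $j$ into a common maximal path, hence makes the edges $E_i, E_j$ of $\Gamma_A$ share a vertex, so connectedness transfers in both directions. Consequently $\beta_1(\Gamma_A) = |Q_0| - |\overline{\mathcal{M}}| + 1 = |Q_1| - |Q_0| + 1 = \beta_1(Q)$.

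For the forward implication, if $A$ is a gentle tree algebra then $Q$ is a tree, so $\beta_1(Q) = 0$; by the identity $\beta_1(\Gamma_A) = 0$, and hence the connected graph $\Gamma_A$ is a tree. Proposition \ref{prop-trivial-extension-of-gentle-alg} then presents $T(A) \cong B_{\Gamma_A}$ as a Brauer graph algebra on a tree with $m \equiv 1$, that is, a Brauer tree algebra without exceptional vertex. For the converse, suppose $T(A)$ is a Brauer tree algebra without exceptional vertex; by definition this exhibits $T(A)$ as a Brauer graph algebra $B_G$ with $G$ a tree and $m \equiv 1$. By the converse half of Schroll's correspondence, $A$ must then be gentle and $T(A) \cong B_{\Gamma_A}$. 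Because a Brauer tree algebra is representation-finite and $B_H$ is representation-finite exactly when $H$ is a Brauer tree, the presentation $B_{\Gamma_A}$ forces $\Gamma_A$ itself to be a tree; the identity read backwards then gives $\beta_1(Q) = 0$, so $Q$ is a tree and $A$ is a gentle tree algebra.

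The main obstacle is the converse half of Schroll's correspondence, namely the implication that $T(A)$ being a Brauer graph algebra forces the fixed algebra $A$ to be gentle (not merely that $T(A) \cong T(B)$ for some gentle $B$, which the trivial-extension construction does not invert on the nose). I would address this by noting that $A$ is the degree-zero part of the canonical grading $T(A) = A \oplus DA$ and that its quiver embeds into the quiver of the special biserial algebra $T(A)$: the ``at most two arrows in and out'' condition and the length-two relations of a Brauer graph algebra restrict to the defining conditions of a gentle algebra on $Q$. (The derived-equivalence viewpoint afforded by Lemma \ref{lem-gentle-tree=iterated-titled} gives independent corroboration, but only up to derived equivalence with a Brauer tree algebra, not the on-the-nose structural identification, which is why the Schroll route is preferred.) The combinatorial identity is elementary once the two bookkeeping facts from the excerpt are in hand; the only remaining care is with small or disconnected cases, where one checks the identity and the tree criterion componentwise.
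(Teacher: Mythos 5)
Your route is genuinely different from the paper's: instead of passing through Lemma \ref{lem-gentle-tree=iterated-titled} and the classification theorems of Assem--Happel--Rold\'an, Hughes--Waschb\"usch and Skowro\'nski, you work directly with Proposition \ref{prop-trivial-extension-of-gentle-alg} and compare first Betti numbers of $Q$ and $\Gamma_A$ via the identity $|\overline{\mathcal{M}}|=2|Q_0|-|Q_1|$. That identity and the connectedness transfer are correct (the double count works even when a vertex lies twice in the same maximal path, provided incidences are counted with multiplicity), so your forward implication is complete and in fact more self-contained and more elementary than the paper's.

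The converse, however, has a genuine gap, and it sits exactly where you flagged it. Schroll's theorem produces \emph{some} gentle algebra $B$ with $T(B)\cong B_G$; it does not say that every $A$ with $T(A)\cong B_G$ is gentle, and non-isomorphic algebras can have isomorphic trivial extensions. Your proposed repair --- ``the quiver of $A$ embeds into that of $T(A)$ and the length-two relations restrict'' --- does not close this. The defining ideal of a Brauer graph algebra is not generated by paths of length two: it also contains the commutativity relations $C_v-C_{v'}$ and the monomials $C_v\alpha_1$. To conclude that $A=T(A)/DA$ is presented by length-two monomial relations on the subquiver of degree-zero arrows, you must show (i) that $DA$ is generated as an ideal of $T(A)$ by the arrows it contains, so that $A$ really is the quotient by an arrow set, and (ii) that every special cycle $C_v$ of $Q_{T(A)}$ contains at least one such arrow, so that the non-quadratic relations become vacuous in the quotient; otherwise a surviving relation $C_v-C_{v'}$ or $C_v\alpha_1$ would destroy gentleness. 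Point (ii) needs an actual argument, e.g.\ that $\mathsf{soc}\, T(A)\subseteq DA$ because $DA$ is faithful over $A$, forcing each socle element $C_v$ to involve a degree-one arrow. None of this is in your sketch, and without it the implication ``$T(A)$ Brauer tree without exceptional vertex $\Rightarrow A$ gentle'' is unproved. The paper sidesteps the issue entirely: the cited theorem of Assem--Happel--Rold\'an is an equivalence about $T(B)$ for \emph{arbitrary} $B$ (representation-finiteness and Cartan class $\mathbb{A}_n$ of $T(B)$ is equivalent to $B$ being iterated tilted of type $\mathbb{A}_n$), which combined with Lemma \ref{lem-gentle-tree=iterated-titled} yields the converse without ever having to recover the presentation of $A$ from that of $T(A)$. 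If you want to keep your combinatorial approach, you should either supply the degree/socle argument above or fall back on the tilting-theoretic input for the converse direction only.
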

\begin{proof}
Let $A$ be a gentle tree algebra. We know from Lemma \ref{lem-gentle-tree=iterated-titled} that $A$ is an iterated tilted algebra of type $\mathbb{A}_n$. On the one hand, \cite[Theorem]{AHR-trivial-extension} claims that an algebra $B$ is iterated tilted of type $\mathbb{A}_n$ if and only if $T(B)$ is of finite representation type and of Cartan class $\mathbb{A}_n$. On the other hand, \cite[Theorem]{HW-trivial-extension} claims that a trivial extension $T(B)$ is of finite representation type and of Cartan class $\mathbb{A}_n$ if and only if $T(B)\simeq T(C)$ for a tilted algebra $C$ of type $\mathbb{A}_n$. Thus, without loss of generality, we may assume that $A$ is a tilted algebra of type $\mathbb{A}_n$.

It is shown in \cite[Section 3]{Skowronski-2006} that the class of trivial extensions of tilted algebras of type $\mathbb{A}_n$ coincides with the class of Brauer tree algebras without exceptional vertex.
\end{proof}

We have the following immediate consequences.
\begin{corollary}\label{cor-main-result}
Let $A$ be a gentle tree algebra with $n$ simples. Then, $\#\mathsf{s\tau\text{-}tilt}\ T(A)=\binom{2n}{n}$.
\end{corollary}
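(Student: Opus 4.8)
The plan is to combine Theorem \ref{main-result} with the known count for Brauer tree algebras recorded in Proposition \ref{prop-Brauer-tree}. The statement of Corollary \ref{cor-main-result} is really just a transport of information: once we know the precise shape of $T(A)$, the number $\#\mathsf{s\tau\text{-}tilt}\ T(A)$ is read off from an existing enumeration result. So the entire argument is a short chain of identifications, and the only care needed is to track how the invariant $n$ (the number of simple $A$-modules) translates across the equivalences.

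First I would invoke Theorem \ref{main-result}: since $A$ is a gentle tree algebra, $T(A)$ is a Brauer tree algebra without exceptional vertex. To apply Proposition \ref{prop-Brauer-tree} I then need to know that the underlying Brauer tree $G$ has exactly $n$ edges. This is where the bookkeeping lives. By Proposition \ref{prop-trivial-extension-of-gentle-alg}, $T(A)\simeq B_{\Gamma_A}$, and in the construction of $\Gamma_A$ the edges correspond bijectively to the vertices of the quiver $Q$ of $A$. A gentle tree algebra with $n$ simples has exactly $n$ vertices in $Q$ (the algebra is basic, so simples are indexed by vertices of $Q$), hence $\Gamma_A$ has exactly $n$ edges. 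Equivalently, the vertices of $Q_G$ correspond to the edges of $G$, and these are in bijection with the simple $B_G$-modules, which number $|T(A)|$; since $T(A)=A\oplus DA$ as a right $A$-module has the same number of simples as $A$, namely $n$, the Brauer tree $G$ has $n$ edges.

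Having established that $T(A)$ is a Brauer tree algebra whose Brauer tree has $n$ edges, I would apply Proposition \ref{prop-Brauer-tree} directly to conclude $\#\mathsf{s\tau\text{-}tilt}\ T(A)=\binom{2n}{n}$. Note that the absence of an exceptional vertex is not strictly needed for this count, since Proposition \ref{prop-Brauer-tree} holds for any Brauer tree with $n$ edges regardless of the multiplicity at the exceptional vertex; nonetheless Theorem \ref{main-result} gives us the cleaner statement that there is no exceptional vertex at all.

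The one subtlety, and the only place where I would slow down, is the identification "$n$ simples of $A$ $\leftrightarrow$ $n$ edges of the Brauer tree.'' It is tempting to conflate the number of \emph{edges} of $G$ with the number of \emph{vertices} of $G$, but Proposition \ref{prop-Brauer-tree} is phrased in terms of edges, and it is the edges of $\Gamma_A$ (not its vertices) that correspond to the vertices of $Q$ and hence to the simples of $A$. Once this correspondence is stated explicitly — edges of $\Gamma_A$ $=$ vertices of $Q$ $=$ simple $A$-modules, of which there are $n$ — the proof is immediate and no genuine obstacle remains.
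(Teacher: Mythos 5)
Your proof is correct and follows exactly the paper's route: the paper's proof of this corollary is the one-line ``This follows from Proposition \ref{prop-Brauer-tree} and Theorem \ref{main-result}.'' Your additional bookkeeping (edges of $\Gamma_A$ $\leftrightarrow$ vertices of $Q$ $\leftrightarrow$ simples of $A$, hence the Brauer tree has $n$ edges) is accurate and merely makes explicit what the paper leaves implicit.
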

\begin{proof}
This follows from Proposition \ref{prop-Brauer-tree} and Theorem \ref{main-result}.
\end{proof}

\begin{corollary}\label{answer-to-question}
Let $A=KQ$ be a path algebra of type $\mathbb{A}$. Then, the number $\#\mathsf{s\tau\text{-}tilt}\ T(A)$ is independent of the orientation of $Q$.
\end{corollary}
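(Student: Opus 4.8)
The plan is to deduce Corollary~\ref{answer-to-question} directly from the machinery already assembled, rather than by any orientation-by-orientation calculation. The key observation is that a path algebra $A = KQ$ of type $\mathbb{A}_n$ is, in particular, a gentle tree algebra: the underlying graph of $Q$ is a tree (indeed a line), and for a quiver of type $\mathbb{A}_n$ with no relations, the conditions defining a gentle algebra are vacuously satisfied since $I = 0$. Thus every orientation of $Q$ yields a gentle tree algebra, and each such algebra has exactly $n$ simples, one per vertex of $Q$.

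First I would record that $A = KQ$ is a gentle tree algebra with $n$ simples, for any choice of orientation of the $\mathbb{A}_n$-diagram $Q$. This is the only step that requires a small verification, and it is genuinely routine: with the zero ideal, the gentle conditions on $I$ hold trivially, and the in/out-degree bounds hold because the underlying graph is a line (so every vertex has degree at most two).

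Next I would invoke Corollary~\ref{cor-main-result}, which asserts that for any gentle tree algebra $A$ with $n$ simples one has $\#\mathsf{s\tau\text{-}tilt}\ T(A) = \binom{2n}{n}$. Applying this to $A = KQ$ for each orientation gives $\#\mathsf{s\tau\text{-}tilt}\ T(KQ) = \binom{2n}{n}$ regardless of how $Q$ is oriented. Since the number of simples $n$ is an invariant of the underlying $\mathbb{A}_n$-diagram and does not depend on the orientation, the count $\binom{2n}{n}$ is the same for every orientation, which is exactly the asserted independence.

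I do not expect a genuine obstacle here, since the substantive content has already been established in Theorem~\ref{main-result} and Corollary~\ref{cor-main-result}; the corollary is a direct specialization. The one point deserving care is the claim that a path algebra of type $\mathbb{A}_n$ is a gentle tree algebra: one should confirm that this holds for \emph{every} orientation, including those with sources and sinks in the interior of the line, so that the conclusion is uniform across orientations. Once that is in hand, the proof is a single line citing Corollary~\ref{cor-main-result}.
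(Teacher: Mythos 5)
Your proposal is correct and matches the paper's proof exactly: the paper also simply observes that $KQ$ is a gentle tree algebra for any orientation of $Q$ and then invokes Corollary \ref{cor-main-result} to get the orientation-independent count $\binom{2n}{n}$. Your extra care in checking the gentle-tree condition for every orientation is a reasonable elaboration of the same one-line argument.
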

\begin{proof}
It is true that $KQ$ is a gentle tree algebra for any choice of the orientation of $Q$.
\end{proof}

\subsection{Brauer line algebras and Brauer star algebras}
As a refinement of Theorem \ref{main-result}, we shall classify gentle tree algebras $A$ according to whether $T(A)$ is a Brauer line algebra or a Brauer star algebra. Let us start with some observations.

\begin{lemma}\label{lem-subquiver-of-type-D}
Let $A=KQ/I$ be a gentle algebra. If $A$ contains a subquiver $Q'$ of type $\mathbb{D}_4$, then $T(A)$ is neither a Brauer line algebra nor a Brauer star algebra.
\end{lemma}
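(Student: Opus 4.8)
The plan is to pass from the algebra $T(A)$ to the graph $\Gamma_A$ via Proposition \ref{prop-trivial-extension-of-gentle-alg}, which gives $T(A)\simeq B_{\Gamma_A}$, and then to show that $\Gamma_A$ is neither a line nor a star by an elementary degree count at the branch vertex of the $\mathbb{D}_4$ subquiver. Recall that the edges of $\Gamma_A$ are indexed by the vertices of $Q$, that the vertices of $\Gamma_A$ are the elements of $\overline{\mathcal{M}}$, and that the degree of a vertex $v(p)$ equals the number of vertices of $Q$ lying on $p$. The two target shapes are detected by their degree sequences: a Brauer line graph has all vertices of degree at most $2$, while a Brauer star graph (with at least two edges) has exactly one vertex of degree $\geq 2$. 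So it suffices to exhibit in $\Gamma_A$ a vertex of degree $\geq 3$ together with a second, distinct vertex of degree $\geq 2$.

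First I would set up the local picture. Let $i$ be the central vertex of the $\mathbb{D}_4$ subquiver $Q'$, so that $i$ is joined by three arrows to three distinct vertices. Since $A$ is gentle, at most two arrows start and at most two end at $i$; replacing $A$ by $A^{\mathrm{op}}$ if necessary (which leaves $\Gamma_A$, and the classes of Brauer line and star algebras, unchanged, as these algebras are symmetric), I may assume that two of the three arrows of $Q'$ at $i$, say $\alpha_1,\alpha_2$, end at $i$, while the third, $\beta$, starts at $i$. The gentleness conditions applied to $\beta$ then force exactly one of $\alpha_1\beta,\alpha_2\beta$ to lie in $I$; say $\alpha_1\beta\notin I$ and $\alpha_2\beta\in I$.

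Next I would read off the two maximal paths through $i$ and their degrees. Let $p\in\mathcal{M}$ be the maximal path containing $\alpha_1$; since $\alpha_1\beta\notin I$ it also contains $\beta$, hence passes through the three distinct vertices $s(\alpha_1),\,i,\,t(\beta)$, so that $\deg v(p)\geq 3$. Let $q\in\mathcal{M}$ be the maximal path containing $\alpha_2$; since $\alpha_2\beta\in I$ and $\beta$ is the relevant continuing arrow at $i$, the path $q$ passes through the two distinct vertices $s(\alpha_2),\,i$, so that $\deg v(q)\geq 2$. The vertex $v(p)$ of degree $\geq 3$ already rules out the line case; for the star case I want $v(p)\neq v(q)$, equivalently $p\neq q$.

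The main obstacle is precisely this distinctness. As each arrow lies in a unique maximal path, $p=q$ would force a single maximal path to contain both $\alpha_1$ and $\alpha_2$, both of which end at $i$, and hence to visit $i$ twice; this can only occur when $Q$ has an oriented cycle through $i$. In that degenerate situation the edge $E_i$ becomes a loop of $\Gamma_A$, and since neither a Brauer line nor a Brauer star graph contains a loop, the conclusion follows at once. Otherwise $p\neq q$, so $v(p)$ and $v(q)$ are two distinct vertices of degree $\geq 2$, which is incompatible with the star degree sequence. In either case $\Gamma_A$ is neither a line nor a star, and therefore $T(A)\simeq B_{\Gamma_A}$ is neither a Brauer line algebra nor a Brauer star algebra.
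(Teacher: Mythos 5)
Your proof is correct and follows essentially the same route as the paper: pass to the Brauer graph $\Gamma_A$ via Proposition \ref{prop-trivial-extension-of-gentle-alg}, normalize the $\mathbb{D}_4$ subquiver up to opposite algebras so that two arrows end and one starts at the central vertex, use gentleness to see that exactly one of the two compositions vanishes, and read off from the two maximal paths through the central vertex a local configuration of $\Gamma_A$ incompatible with a line or a star. The only difference is that you explicitly treat the degenerate case where the two maximal paths coincide (disposing of it via the resulting loop $E_i$), a case the paper's displayed subgraph implicitly assumes away; this is a welcome extra bit of care rather than a change of method.
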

\begin{proof}
A quiver of type $\mathbb{D}_4$ is given by
\begin{center}
$\vcenter{\xymatrix@C=0.8cm@R=0.1cm{\circ\ar[dr]& &\\
&\circ\ar[r]&\circ\\
\circ\ar[ur]&&}}$, $\vcenter{\xymatrix@C=0.8cm@R=0.1cm{\circ\ar[dr]& &\\
&\circ&\circ\ar[l]\\
\circ\ar[ur]&&}}$, $\vcenter{\xymatrix@C=0.8cm@R=0.1cm{\circ& &\\
&\circ\ar[ul]\ar[dl]&\circ\ar[l]\\
\circ&&}}$ or $\vcenter{\xymatrix@C=0.8cm@R=0.1cm{\circ& &\\
&\circ\ar[r]\ar[ul]\ar[dl]&\circ\\
\circ&&}}$.
\end{center}
Since $A$ is a gentle algebra, we may suppose (up to opposite algebras) that the subquiver $Q'$ of $A$ is displayed as
\begin{center}
$\vcenter{\xymatrix@C=0.8cm@R=0.2cm{1\ar[dr]^{\alpha}& &\\
&3\ar[r]^{\beta}&4\\
2\ar[ur]_{\gamma}&&}}$
\end{center}
such that $\gamma\beta\neq 0$ and $\alpha\beta=0$. We have learned from Proposition \ref{prop-trivial-extension-of-gentle-alg} that $T(A)$ is a Brauer graph algebra given by the graph $\Gamma_A$ associated with $A$. Then, we look at the subgraph $\Gamma_{Q'}$ of $\Gamma_A$, which corresponds to the vertices $\{1,2,3,4\}$ in $A$.

Let $\mathcal{M}$ be the set of maximal paths in $A$. Then, it is obvious that $p\alpha, q\gamma\beta t\in \mathcal{M}$ for some paths $p,q,t$ in $A$. Recall that each maximal path $w$ in $\mathcal{M}$ induces a cyclic ordering of the edges connected to the corresponding vertex $v(w)$ in $\Gamma_A$. Hence, the subgraph $\Gamma_{Q'}$ of $\Gamma_A$ is displayed as follows,
\begin{center}
$\vcenter{\xymatrix@C=0.8cm@R=0.5cm{&\circ&\\
*++[o][F]{v(q\gamma\beta t)}\ar@{-}[r]^{3}\ar@{-}[ru]^{2}\ar@{-}[rd]^{4}&*++[o][F]{v(p\alpha)}\ar@{-}[r]^{\ \ \ 1}&\circ\\
&\circ&}}$,
\end{center}
where the cyclic ordering is taken clockwise and the leaf vertices $\circ$ are not necessarily distinct. This implies that $\Gamma_A$ is neither a Brauer line nor a Brauer star.
\end{proof}

We now assume that $A$ is a gentle tree algebra and $T(A)$ is either a Brauer line algebra or a Brauer star algebra. Then, the above observation states that the quiver of $A$ must be of type $\mathbb{A}$. We define the quiver of type $\mathbb{A}_n$ associated with linear orientation as
\begin{center}
$\vec{A}_n: \xymatrix@C=1cm{1\ar[r]^{\alpha_1}&2\ar[r]^{\alpha_2}&3\ar[r]&\cdots \ar[r]&n-1\ar[r]^{\ \ \alpha_{n-1}}&n}$.
\end{center}

\begin{lemma}\label{lem-linear-quiver}
Let $A=K\vec{A}_n$. Then, the trivial extension $T(A)$ is a Brauer star algebra.
\end{lemma}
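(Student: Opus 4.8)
The plan is to compute the Brauer graph $\Gamma_A$ associated with $A=K\vec{A}_n$ directly from the recipe preceding Proposition \ref{prop-trivial-extension-of-gentle-alg}, and then to recognize $\Gamma_A$ as a star so that Proposition \ref{prop-trivial-extension-of-gentle-alg} identifies $T(A)\simeq B_{\Gamma_A}$ as a Brauer star algebra. The crucial simplification is that $\vec{A}_n$ carries the zero ideal $I=0$, so every length-two path survives and the combinatorics of maximal paths is as simple as possible. First I would determine the set $\mathcal{M}$ of maximal paths: since the quiver is a single linearly oriented chain $1\xrightarrow{\alpha_1}2\to\cdots\to n$ with no relations, the unique maximal path is $p=\alpha_1\alpha_2\cdots\alpha_{n-1}$ running from vertex $1$ to vertex $n$, whence $\mathcal{M}=\{p\}$.

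Next I would compute $\mathcal{M}_0$. Vertex $1$ is a source with a single outgoing arrow, vertex $n$ is a sink with a single incoming arrow, and each intermediate vertex $i$ with $2\leqslant i\leqslant n-1$ has a single incoming arrow $\alpha_{i-1}$ and a single outgoing arrow $\alpha_i$ with $\alpha_{i-1}\alpha_i\notin I$ because $I=0$. Hence every vertex of $Q$ contributes a trivial path, giving $\mathcal{M}_0=\{e_1,\ldots,e_n\}$ and $\overline{\mathcal{M}}=\{p,e_1,\ldots,e_n\}$ with $n+1$ elements.

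With $\overline{\mathcal{M}}$ in hand, I would build $\Gamma_A$: its $n+1$ vertices are $v(p)$ and $v(e_1),\ldots,v(e_n)$, and its $n$ edges $E_1,\ldots,E_n$ correspond to the vertices of $Q$. The key observation is that vertex $i$ of $Q$ lies in exactly two elements of $\overline{\mathcal{M}}$, namely the maximal path $p$, which passes through all vertices, and the trivial path $e_i$; therefore each edge $E_i$ joins the central vertex $v(p)$ to the leaf $v(e_i)$. This exhibits $\Gamma_A$ as the star with center $v(p)$ and $n$ pendant edges. Finally, the cyclic ordering around $v(p)$ induced by $p$ is $E_1<E_2<\cdots<E_n<E_1$, while each leaf is incident to a single edge of multiplicity one, so $\Gamma_A$ matches the Brauer star graph described earlier and Proposition \ref{prop-trivial-extension-of-gentle-alg} yields that $T(A)\simeq B_{\Gamma_A}$ is a Brauer star algebra.

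The computations are all routine bookkeeping; the only point requiring genuine care is confirming that each vertex of $Q$ lies in precisely the two elements $p$ and $e_i$ of $\overline{\mathcal{M}}$, so that no spurious edges appear and the resulting graph is truly a star rather than, say, a path with extra pendant structure. I expect this verification, together with checking that the induced cyclic ordering agrees with the one defining the Brauer star, to be the main (though still elementary) obstacle.
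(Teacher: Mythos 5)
Your proposal is correct and follows exactly the paper's argument: the paper's proof likewise computes $\mathcal{M}=\{\alpha_1\alpha_2\cdots\alpha_{n-1}\}$ and $\overline{\mathcal{M}}=\{\alpha_1\cdots\alpha_{n-1},e_1,\dots,e_n\}$ and then observes that $\Gamma_A$ is a star; you have simply spelled out the bookkeeping (each vertex $i$ of $Q$ lying in exactly $p$ and $e_i$, and the induced cyclic ordering) that the paper leaves as ``easy to check.''
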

\begin{proof}
It is easy to check that
\begin{center}
$\mathcal{M}=\{\alpha_1\alpha_2\cdots\alpha_{n-1}\}$ and $\overline{\mathcal{M}}=\{\alpha_1\alpha_2\cdots\alpha_{n-1}, e_1,e_2,\cdots,e_n\}$.
\end{center}
Then, the graph $\Gamma_A$ is obviously a Brauer star.
\end{proof}

\begin{lemma}\label{lem-rad2-not-zero}
Let $A$ be a gentle tree algebra with a quiver of type $\mathbb{A}_n$ and assume that $A$ is not isomorphic to $K\vec{A}_n$. If $\mathsf{rad}^2\ A\neq 0$, then $T(A)$ is neither a Brauer line algebra nor a Brauer star algebra.
\end{lemma}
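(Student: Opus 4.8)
The plan is to translate the statement entirely into the combinatorics of the Brauer graph $\Gamma_A$. By Proposition \ref{prop-trivial-extension-of-gentle-alg} we have $T(A)\cong B_{\Gamma_A}$, so following the convention of Lemmas \ref{lem-subquiver-of-type-D} and \ref{lem-linear-quiver} it suffices to show that $\Gamma_A$ is neither a line nor a star. The key feature of the construction I would exploit is that the edges of $\Gamma_A$ are the vertices of $Q$, and a vertex $v(p)$ arising from $p\in \overline{\mathcal{M}}$ is incident to exactly those edges $E_i$ for which $i$ lies on $p$ (since $E_i$ joins the two elements of $\overline{\mathcal{M}}$ containing $i$). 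As $Q$ is a tree of type $\mathbb{A}_n$, a maximal path $p\in\mathcal{M}$ of length $k$ visits $k+1$ distinct vertices, so $v(p)$ has degree $k+1$ in $\Gamma_A$, while each trivial path $e_i\in\mathcal{M}_0$ yields a leaf.

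First I would rule out the Brauer line. Since $\mathsf{rad}^2\ A\neq 0$, there is a nonzero path of length two, hence a maximal path $p\in\mathcal{M}$ of length at least two. By the degree computation above, $v(p)$ has degree at least three in $\Gamma_A$. A line graph has all vertices of degree at most two, so $\Gamma_A$ cannot be a line, and therefore $T(A)$ is not a Brauer line algebra.

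Next I would rule out the Brauer star by showing that $\Gamma_A$ has at least two vertices of degree at least two, that is, $|\mathcal{M}|\geq 2$; a star has a single vertex of degree $>1$, namely its center. I would argue the contrapositive: if $|\mathcal{M}|=1$, say $\mathcal{M}=\{p\}$, then, because every arrow of $Q$ lies in a unique maximal path, $p$ must contain all $n-1$ arrows. In the tree $Q$ this forces $p$ to be a directed path visiting all $n$ vertices, which is only possible when $Q$ is linearly oriented; moreover the nonvanishing of $p$ forces every consecutive composition of arrows to be nonzero, so $I=0$ and $A\cong K\vec{A}_n$. This contradicts the hypothesis $A\not\cong K\vec{A}_n$, so $|\mathcal{M}|\geq 2$, producing two distinct non-leaf vertices $v(p_1)$ and $v(p_2)$. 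Hence $\Gamma_A$ is not a star and $T(A)$ is not a Brauer star algebra.

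The degree bookkeeping in the first step is immediate; the point requiring care is the implication $|\mathcal{M}|=1 \Rightarrow A\cong K\vec{A}_n$ in the second step, where one must verify both that the orientation of the type-$\mathbb{A}_n$ quiver is forced to be linear and that no relations can survive once a single maximal path exhausts all arrows. Once this is settled, both conclusions follow directly from the shape of $\Gamma_A$.
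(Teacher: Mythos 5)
Your proof is correct, and it reaches the conclusion by a genuinely different (and arguably cleaner) route than the paper. The paper argues locally: from the two hypotheses it extracts, up to opposite algebras, one of two explicit subquivers --- $1\to 2\to 3\to 4$ with $\alpha\beta\neq 0$, $\beta\gamma=0$, or $1\to 2\to 3\leftarrow 4$ with $\alpha\beta\neq 0$ --- deduces that $p\alpha\beta$ and $\gamma q$ (resp.\ $q\gamma$) are maximal, and then, exactly as in the picture from the proof of Lemma \ref{lem-subquiver-of-type-D}, exhibits in $\Gamma_A$ a vertex of degree at least three adjacent to a second non-leaf vertex, which is incompatible with both a line and a star. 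You instead do global degree bookkeeping on $\Gamma_A$: a maximal path of length $k$ yields a vertex of degree $k+1$, so $\mathsf{rad}^2 A\neq 0$ alone already produces a vertex of degree at least three (ruling out the line), while $A\not\simeq K\vec{A}_n$ alone forces $|\mathcal{M}|\geq 2$ and hence two distinct non-leaf vertices (ruling out the star). This decouples the two hypotheses, avoids the subquiver case analysis, and makes visible which hypothesis is responsible for which conclusion; the price is the auxiliary implication $|\mathcal{M}|=1\Rightarrow A\simeq K\vec{A}_n$, which you correctly identify as the step needing care and which does hold: a single maximal path must contain every arrow, in a type-$\mathbb{A}_n$ quiver this forces linear orientation, and its nonvanishing kills all possible length-two relations, so $I=0$. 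Both proofs share the same implicit convention --- identifying ``$T(A)$ is a Brauer line/star algebra'' with ``$\Gamma_A$ is a line/star'' --- which you flag explicitly, consistently with Lemmas \ref{lem-subquiver-of-type-D} and \ref{lem-linear-quiver}.
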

\begin{proof}
Let $Q$ be the quiver of $A$. Since $A\not\simeq K\vec{A}_n$ and $\mathsf{rad}^2\ A\neq 0$, $Q$ must contain a subquiver $Q'$ which is either
\begin{center}
$\xymatrix@C=1cm{1\ar[r]^{\alpha}&2\ar[r]^{\beta}&3\ar[r]^{\gamma}&4}$ with $\alpha\beta\neq 0$ and $\beta\gamma=0$,
\end{center}
or
\begin{center}
$\xymatrix@C=1cm{1\ar[r]^{\alpha}&2\ar[r]^{\beta}&3&4\ar[l]_{\gamma}}$ with $\alpha\beta\neq 0$,
\end{center}
up to opposite algebras. Let $\mathcal{M}$ be the set of maximal paths in $A$. Then, one may find that either
\begin{center}
$p\alpha\beta, \gamma q \in \mathcal{M}$ for some paths $p,q$ in $A$,
\end{center}
or
\begin{center}
$p\alpha\beta, q\gamma \in \mathcal{M}$ for some paths $p,q$ in $A$.
\end{center}
By a similar construction with the subgraph $\Gamma_{Q'}$ in the proof of Lemma \ref{lem-subquiver-of-type-D}, we find that $\Gamma_A$ is neither a Brauer line nor a Brauer star.
\end{proof}

\begin{lemma}\label{lem-rad2-zero}
Let $A$ be a gentle tree algebra with a quiver of type $\mathbb{A}_n$. If $\mathsf{rad}^2\ A=0$, then the trivial extension $T(A)$ is a Brauer line algebra.
\end{lemma}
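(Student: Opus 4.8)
The plan is to apply Proposition~\ref{prop-trivial-extension-of-gentle-alg}, which identifies $T(A)$ with the Brauer graph algebra $B_{\Gamma_A}$ whose multiplicity function is identically one; it therefore suffices to prove that the underlying graph of $\Gamma_A$ is a line. This mirrors the proof of Lemma~\ref{lem-linear-quiver}, so I would compute $\mathcal{M}$, $\mathcal{M}_0$, and the incidences of the edges of $\Gamma_A$ directly and then read off the graph.

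First I would exploit the hypothesis $\mathsf{rad}^2\ A=0$: every path of length at least two is zero, so the non-trivial paths of $A$ are exactly its arrows, and each arrow is automatically a maximal path (it can be neither pre- nor post-composed with an arrow inside $\mathcal{P}$). Writing the type-$\mathbb{A}_n$ quiver as a line on vertices $1,\dots,n$ with arrows $a_1,\dots,a_{n-1}$ between consecutive vertices, oriented arbitrarily, this gives $\mathcal{M}=\{a_1,\dots,a_{n-1}\}$ regardless of the orientation.

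Next I would determine $\mathcal{M}_0$. For an interior vertex $i$ (with $2\le i\le n-1$) the two incident arrows either point the same way, making $i$ a sink or a source incident to two arrows so that the first two defining conditions for $\mathcal{M}_0$ fail, or else one enters and one leaves, say $\alpha$ then $\beta$, in which case $\mathsf{rad}^2\ A=0$ forces $\alpha\beta\in I$ and the third condition fails as well; hence no interior vertex contributes. Each endpoint $1$ and $n$ carries a single arrow and is a source or a sink, so it meets the first or second condition; thus $\mathcal{M}_0=\{e_1,e_n\}$ and $|\overline{\mathcal{M}}|=(n-1)+2=n+1$, the number of vertices of a Brauer line with $n$ edges.

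Finally I would read the edges of $\Gamma_A$ off the incidences: the vertex $i\in Q$ contributes the edge $E_i$ joining the two elements of $\overline{\mathcal{M}}$ containing $i$. Endpoint $1$ lies in $a_1$ and $e_1$, endpoint $n$ lies in $a_{n-1}$ and $e_n$, and each interior vertex $i$ lies in the consecutive arrows $a_{i-1}$ and $a_i$, producing exactly the path
\[
v(e_1)\ \overset{E_1}{-}\ v(a_1)\ \overset{E_2}{-}\ \cdots\ \overset{E_{n-1}}{-}\ v(a_{n-1})\ \overset{E_n}{-}\ v(e_n),
\]
so $\Gamma_A$ is a Brauer line and $T(A)\cong B_{\Gamma_A}$ is a Brauer line algebra. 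The one delicate point, and the step where the hypothesis does all the work, is the computation of $\mathcal{M}_0$: it is precisely the vanishing of every length-two path that keeps the arrows as separate maximal paths and prevents an interior vertex from fusing two arrows into one long maximal path --- the very phenomenon that creates the central vertex of the star in Lemma~\ref{lem-linear-quiver}. Once this is checked the graph is manifestly a line, and I expect no further obstacle.
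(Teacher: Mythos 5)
Your proposal is correct and follows essentially the same route as the paper: both compute $\mathcal{M}=\{\alpha_1,\dots,\alpha_{n-1}\}$ and $\overline{\mathcal{M}}=\mathcal{M}\cup\{e_1,e_n\}$ from $\mathsf{rad}^2 A=0$ and then read off that $\Gamma_A$ is a line. Your version merely spells out the case analysis for why interior vertices fail the $\mathcal{M}_0$ conditions, which the paper leaves implicit.
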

\begin{proof}
Let $Q$ be the quiver of $A$. We may assume that the labeling of $Q$ is given by
\begin{center}
$\xymatrix@C=1cm@R=1cm{1\ar@{-}[r]^{\alpha_1} &2\ar@{-}[r]^{\alpha_2} &3\ar@{-}[r]^{\alpha_3} &\cdots \ar@{-}[r] &n-1\ar@{-}[r]^{\ \ \alpha_{n-1}} &n}$,
\end{center}
and the orientation is arbitrary. Since $\mathsf{rad}^2\ A=0$, we have
\begin{center}
$\mathcal{M}=\{\alpha_1,\alpha_2,\cdots,\alpha_{n-1}\}$ and $\overline{\mathcal{M}}=\{\alpha_1,\alpha_2,\cdots,\alpha_{n-1}, e_1,e_n\}$.
\end{center}
In fact, by the definition of $\mathcal{M}_0$, we have $e_i\not\in \overline{\mathcal{M}}$ for any $1<i<n$. Then, it turns out that  $\Gamma_A$ is a Brauer line and therefore, $T(A)$ is a Brauer line algebra.
\end{proof}

Now, we are able to give the equivalent condition for $T(A)$ being a Brauer star algebra or a Brauer line algebra.
\begin{theorem}
Let $A=KQ/I$ be a gentle tree algebra and $T(A)$ its trivial extension.
\begin{enumerate}
  \item $T(A)$ is a Brauer star algebra if and only if $A\simeq K\vec{A}_n$ for some $n\in \mathbb{N}$.
  \item $T(A)$ is a Brauer line algebra if and only if $\mathsf{rad}^2\ A=0$ and $Q$ is of type $\mathbb{A}$.
\end{enumerate}
\end{theorem}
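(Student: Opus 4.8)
The plan is to assemble the two biconditionals directly from the preceding lemmas, organised around a reduction to type~$\mathbb{A}$ followed by a trichotomy. First I would record the reduction: since $A$ is a gentle tree algebra its quiver $Q$ is a tree, and if $T(A)$ is a Brauer line or a Brauer star algebra then Lemma~\ref{lem-subquiver-of-type-D} forbids any subquiver of type $\mathbb{D}_4$; a tree with no such subquiver has every vertex of degree at most two and is therefore a path, i.e.\ of type $\mathbb{A}_n$. For part~(2) I would also need the complementary observation that any branch vertex forces $\mathsf{rad}^2 A \neq 0$: at a vertex of degree three the gentle conditions applied to the unique arrow on the ``thin'' side leave exactly one of the two length-two compositions outside $I$, producing a nonzero path of length two. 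Hence $\mathsf{rad}^2 A = 0$ by itself already forces $Q$ to be of type $\mathbb{A}_n$.

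With both reductions available, everything reduces to the case that $Q$ is of type $\mathbb{A}_n$, where Lemmas~\ref{lem-linear-quiver}, \ref{lem-rad2-not-zero} and \ref{lem-rad2-zero} supply a complete trichotomy: $A \simeq K\vec{A}_n$ gives a Brauer star; $A \not\simeq K\vec{A}_n$ with $\mathsf{rad}^2 A \neq 0$ gives neither a line nor a star; and $\mathsf{rad}^2 A = 0$ gives a Brauer line. Part~(1) is then immediate: the forward implication is Lemma~\ref{lem-linear-quiver}, and for the converse I would argue contrapositively, noting that if $A \not\simeq K\vec{A}_n$ the other two cases of the trichotomy produce either ``neither'' or a line, so $T(A)$ is not a star. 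Part~(2) is read off dually: $\mathsf{rad}^2 A = 0 \Rightarrow$ line is Lemma~\ref{lem-rad2-zero}, whose hypothesis on $Q$ is guaranteed by the reduction, while if $\mathsf{rad}^2 A \neq 0$ the trichotomy yields a star or ``neither'', so $T(A)$ is not a line.

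The step I expect to demand the most care is the degenerate small-$n$ overlap, since a Brauer line and a Brauer star with $n$ edges coincide exactly when $n \leqslant 2$, and only for $n \geqslant 3$ are the two trichotomy outcomes genuinely distinct. I would dispose of this by observing that $A \not\simeq K\vec{A}_n$ can occur only for $n \geqslant 3$ (for $n \leqslant 2$ there is, up to isomorphism, a unique gentle tree algebra of type $\mathbb{A}_n$, namely $K\vec{A}_n$), and that $\mathsf{rad}^2 A \neq 0$ with $A \simeq K\vec{A}_n$ likewise forces $n \geqslant 3$. Thus every place where the argument invokes ``a line is not a star'' already has index at least three, so the star has a central vertex of degree at least three whereas every vertex of the line has degree at most two, and no collision arises. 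Verifying exhaustiveness of the trichotomy and the consistency of these boundary instances is the only genuinely delicate point; the remainder is bookkeeping on the four lemmas.
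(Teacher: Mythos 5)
Your proposal is correct and follows essentially the same route as the paper: the same reduction via Lemma \ref{lem-subquiver-of-type-D} to quivers of type $\mathbb{A}_n$, followed by the same trichotomy supplied by Lemmas \ref{lem-linear-quiver}, \ref{lem-rad2-not-zero} and \ref{lem-rad2-zero}. The two details you flag --- that a branch vertex already forces $\mathsf{rad}^2 A \neq 0$ (needed so that $\mathsf{rad}^2 A = 0$ legitimately places you in the type-$\mathbb{A}_n$ case before invoking Lemma \ref{lem-rad2-zero}), and that the line/star coincidence for $n \leqslant 2$ never collides with the cases where the trichotomy must distinguish them --- are both correct and are points the paper's proof leaves implicit.
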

\begin{proof}
We give a classification for gentle tree algebras as follows.
\begin{itemize}
  \item $A\simeq K\vec{A}_n$;
  \item $Q$ is of type $\mathbb{A}_n$ but $A\not\simeq K\vec{A}_n$;
  \begin{itemize}
    \item $\mathsf{rad}^2\ A=0$;
    \item $\mathsf{rad}^2\ A\neq 0$;
  \end{itemize}
  \item $A$ contains a subquiver of type $\mathbb{D}_4$.
\end{itemize}
It is obvious that this is a complete classification. Then, the statement follows from Lemma \ref{lem-subquiver-of-type-D}, Lemma \ref{lem-linear-quiver}, Lemma \ref{lem-rad2-not-zero} and Lemma \ref{lem-rad2-zero}.
\end{proof}

\subsection{Brauer cycle algebras}
In this subsection, we figure out when the trivial extensions of gentle algebras are Brauer cycle algebras.

We suppose that $A=KQ/I$ is a gentle algebra and $T(A)$ is a Brauer cycle algebra. Then, by our main result Theorem \ref{main-result}, $Q$ is not a tree. By the proof of Lemma \ref{lem-subquiver-of-type-D}, we find that $Q$ cannot contain a subquiver of type $\mathbb{D}_4$. Moreover, a similar proof shows that $Q$ cannot contain a subquiver $Q'$, which is presented by
\begin{center}
$\xymatrix@C=1cm{1\ar[r]^{\alpha}&2\ar@<0.5ex>[r]^{\beta}&3\ar@<0.5ex>[l]^{\gamma}}$ with $\alpha\beta\neq 0, \gamma\beta=0$ or $\alpha\beta=0, \gamma\beta\neq 0$,
\end{center}
or
\begin{center}
$\xymatrix@C=1cm{1\ar[r]^{\alpha}&2\ar@(ur,dr)^{\beta}}$ with $\alpha\beta\neq 0, \beta^2=0$,
\end{center}
up to opposite algebras. Summing these up, $Q$ must be of the form $\widetilde{\mathbb{A}}_n$ for some $n \in \mathbb{N}$.

\begin{theorem}\label{result-Brauer-cycle}
Let $A=KQ/I$ be a gentle algebra. Then, the trivial extension $T(A)$ is a Brauer cycle algebra if and only if $\mathsf{rad}^2\ A=0$ and $Q$ is of type $\widetilde{\mathbb{A}}$.
\end{theorem}
\begin{proof}
We assume that $Q$ is labeled as
\begin{center}
$\vcenter{\xymatrix@C=1cm@R=0.3cm{ &2\ar@{-}[r]^{\alpha_2}&3\ar@{-}[r]^{\alpha_3}&\cdots \ar@{-}[r] &\circ\ar@{-}[rd] & \\ 1\ar@{-}[ru]^{\alpha_1}\ar@{-}[rd]^{\alpha_n}&&&&&\circ\\
&n\ar@{-}[r]^{\alpha_{n-1}\ \ }&n-1\ar@{-}[r]^{ }&\cdots \ar@{-}[r] &\circ\ar@{-}[ru] & }}$,
\end{center}
with an arbitrary orientation. Recall that $\overline{\mathcal{M}}=\mathcal{M}\cup \mathcal{M}_0$, as defined in subsection \ref{subsection-2.3}.

If $\mathsf{rad}^2\ A=0$, then $\mathcal{M}=\{\alpha_1,\alpha_2,\cdots,\alpha_{n}\}$ and $\mathcal{M}_0=\emptyset$. It turns out that $\Gamma_A$ is a Brauer cycle and hence, $T(A)$ is a Brauer cycle algebra.

If $\mathsf{rad}^2\ A\neq 0$, then there exists at least one $\alpha_i\alpha_{i+1}\notin I$ for some $1\leqslant i\leqslant n$ up to opposite algebras, where we treat $\alpha_{n+1}$ as $\alpha_1$. This implies that $p\alpha_i\alpha_{i+1}q \in \mathcal{M}$ for some paths $p,q$ in $A$, and $e_{i+1} \in \mathcal{M}_0$. It turns out that $\Gamma_A$ contains
\begin{center}
$\vcenter{\xymatrix@C=1.5cm@R=0.6cm{&\circ\\
*++[o][F]{v(p\alpha_i\alpha_{i+1}q)}\ar@{-}[r]^{i+1}\ar@{-}[ru]^{i}\ar@{-}[rd]^{i+2}&*++[o][F]{v(e_{i+1})}\\
&\circ}}$
\end{center}
as a subgraph. Thus, $\Gamma_A$ is not a Brauer cycle.
\end{proof}

\begin{corollary}\label{cor-Brauer-cycle}
Let $A$ be a gentle algebra with a quiver $Q$ of type $\widetilde{\mathbb{A}}_n$. If $\mathsf{rad}^2\ A=0$ and $n$ is odd, then $\#\mathsf{s\tau\text{-}tilt}\ T(A)=2^{2n-1}$ and it is independent of the orientation of $Q$.
\end{corollary}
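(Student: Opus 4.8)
The plan is to chain together the two preceding results that have already been established in the excerpt; no genuinely new argument is required. First I would invoke Theorem \ref{result-Brauer-cycle}: since $A$ is a gentle algebra whose quiver is of type $\widetilde{\mathbb{A}}_n$ and $\mathsf{rad}^2\ A = 0$, that theorem states immediately that the trivial extension $T(A)$ is a Brauer cycle algebra. This step amounts to observing that the hypotheses assumed here are precisely the hypotheses of Theorem \ref{result-Brauer-cycle}, so nothing beyond a citation is needed.

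Next I would pin down the number of simple modules of $T(A)$, since Proposition \ref{prop-Brauer-cycle} is phrased in terms of that number. Under the construction recalled in subsection \ref{subsection-2.3}, the edges of the associated Brauer graph $\Gamma_A$ are in bijection with the vertices of $Q$, and for a quiver of type $\widetilde{\mathbb{A}}_n$ labeled as in the proof of Theorem \ref{result-Brauer-cycle} there are exactly $n$ vertices $1, 2, \ldots, n$. Because the simple modules of a Brauer graph algebra are indexed by the edges of its Brauer graph, $T(A)$ has precisely $n$ simples. This small piece of bookkeeping is the only point demanding any care, and it is the closest thing to an obstacle in an otherwise formal deduction.

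Finally, with $T(A)$ a Brauer cycle algebra on $n$ simples and $n$ odd, I would apply Proposition \ref{prop-Brauer-cycle} to conclude $\#\mathsf{s\tau\text{-}tilt}\ T(A) = 2^{2n-1}$. The odd-$n$ assumption enters only to satisfy the parity hypothesis of Proposition \ref{prop-Brauer-cycle}, and the entire corollary is thus a direct combination of Theorem \ref{result-Brauer-cycle} and Proposition \ref{prop-Brauer-cycle}.
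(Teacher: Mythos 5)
Your proposal is correct and matches the paper's proof, which likewise deduces the corollary directly from Theorem \ref{result-Brauer-cycle} and Proposition \ref{prop-Brauer-cycle}. The extra bookkeeping you include (that $T(A)$ has exactly $n$ simples, since the edges of $\Gamma_A$ correspond to the $n$ vertices of $Q$) is accurate and is left implicit in the paper.
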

\begin{proof}
This follows from Proposition \ref{prop-Brauer-cycle} and Theorem \ref{result-Brauer-cycle}.
\end{proof}

{\bf Conflict of Interest.}
The authors declared that they have no conflicts of interest to this declare.

\section*{Acknowledgements}
The authors thank Professor Osamu Iyama for his insightful suggestions when the second author was visiting Nagoya University. Qi Wang would also like to thank Toshitaka Aoki for many helpful discussions. Qi Wang was partially supported by the JSPS Grant-in-Aid for JSPS Fellows \text{20J10492}. Yingying Zhang was supported by the NSFC (Grant No. 12201211) and the China Scholarship Council (Grant No. 202109710002).


\end{document}